\newcommand{\RR}{\mathbb{R}}
\newcommand{\RF}{\mathbb{R}_\mathcal{F}}
\newcommand{\RFA}{\mathbb{R}_{\mathcal{F}(A)}}
\newtheorem{theorem}{Theorem}
\newtheorem{definition}{Definition}
\newtheorem{proposition}{Proposition}
\newtheorem{lemma}{Lemma}
\newtheorem{corollary}{Corollary}
\begin{document}

 \title{Fuzzy Variational Calculus in Linearly Correlated Space: Part I}

\author{Gastão S. F. Frederico\thanks{gastao.frederico@ua.pt}}
\affil{Federal University of Ceará, Campus Russas, Brazil}

\author{Estevão~Esmi\thanks{eesmi@unicamp.br} \; and \; Laécio~C.~Barros\thanks{laeciocb@ime.unicamp.br}}
\affil{Department of Applied Mathematics, Universidade Estadual de Campinas, Campinas, Brazil}

\date{}

\maketitle

{\small 
\section*{Abstract} 

This article is the first part of series of articles that aim to present the foundations for fuzzy variational calculus for functions taking values in the space of linearly correlated fuzzy numbers $\RFA$. 
Recall that the space $\RFA$ is composed by all sums of a real number $r$ and a fuzzy number $qA$, where  $A$ is a given asymmetric fuzzy number. Two advantages of this space are that it can be equipped with a Banach space structure and, similar to additive models in Statistics, its elements can be interpreted as the sum of a deterministic expected/predictable value with an uncertain/noise component. 
The foundation of variational calculus theory requires the definition and establishement of many concepts and results. This aritcle presents a total order relation on $\RFA$ for wihch the notions of local minimal and maximal of a $\RFA$-valued function $f$ can be derived. We present a fuzzy version of the first and second optimality conditions in terms of derivatives of $f$. 
Finally, we present a generalized  fuzzy version of du Bois--Reymond lemma which is essential in variational calculus theory.

{\bf keyword: } Fuzzy Variational Calculus; Linearly correlated fuzzy numbers; 
Order relation; Optimality conditions; Lagrange lemma; du Bois-Reymond lemma.
}

\section{Introduction}

Nowadays, there are several approach about calculus for fuzzy number-valued functions and all them extends the classical calculus for real functions. As a consequence, this can also be seen in  fuzzy approach to calculus of variations by considering fuzzy functions and their derivatives into the variational integrals to be extremized. This may occur naturally in many problems of physics and mechanics.  Recently, Farhadinia \cite{farhadinia2011necessary} studied necessary optimality conditions for fuzzy variational problems using the fuzzy differentiability concept due to Buckley and Feuring \cite{buckley2000fuzzy}. In \cite{fard2012note}, Fard and Zadeh use the concept of $\alpha$–differentiability to obtain an extended fuzzy Euler-Lagrange condition. Subsequently, Fard {\it et al.} present the fuzzy Euler–Lagrange conditions for fuzzy constrained and unconstrained variational problems under the generalized Hukuhara ($gH$) differentiability \cite{fard2014fuzzy}. 
Base on the notion of $gH$-differentiability, Zhang {\it et al.} study Euler-Lagrange equations for fuzzy fractional variational problems under gH-Atangana-Baleanu Differentiability \cite{zhang2018generalized}. 
However, their approach does not bring light in the concept of extremals (i.e., the concepts of minimal and maximal), since there is not a consensual (total) order relation on fuzzy numbers. 

In the literature, there are many proposals of order relation on fuzzy numbers, for instance, see \cite{goetschel86,neres2024alpha,valvis2009new,zumelzu2022admissible,wang2014total} and the references therein. Despite all these order relations, one of them has been usual in optimization problems that is defined in terms of endpoints of the $\alpha$-levels \cite{hernandez2022review,osuna2019optimality,osuna2016necessary}. In \cite{van2020convergence}, authors investigate optimizations problems in terms of other order relation that is provided in \cite{goetschel86}. Recently, Antczak presents optimality conditions for fuzzy optimization problems under granular differentiable fuzzy objective functions \cite{antczak2024optimality}.  

Our study do not deal with the general class of fuzzy number ($\RF$) nor does it consider the fuzzy calculus from generalized Hukuhara derivative. Instead, here we use the subclass $\RFA$ of $\RF$ which is a subset composed by linearly correlated fuzzy numbers or linearly interactive fuzzy numbers (in fact, the precise term it should be linearly
interactive fuzzy variables, the interested reader can see \cite{de2023interactive,costa2023theory}.
Here, we present a different and new total order relation on $\RFA$ for which we state necessary and sufficient optimality conditions. 

In optimization problems, the objective function often involves squaring operations. 
Similarly, this can also be the case when we are dealing with fuzzy optimization problems. 
To this propose, it is reasonable to expected that the square of a fuzzy number is greater or equal than zero. 
However, in general, such a property does not hold true if we define the square of a fuzzy number in terms of the usual multiplication operation. Moreover, $\RFA$ is not closed under the usual multiplication operation.  
In contrast, Longo {\it et al.} defined the $\Psi$-cross product on $\RFA$ for which $\RFA$ is closed under it \cite{longo2022cross}. 
In this work, we define a total order on $\RFA$ such that the $\Psi$-cross product of any element of $\RFA$ by itself is greater or equal zero.  
These results are important to establish the fuzzy version of the Langrange du Bois-Reymond lemmas that are  two fundamental results that have major implications in several areas of mathematics including variational calculus. 
To the best of our knowledge, this is the first time that these lemmas are established in the fuzzy context. 


The paper is organized as follows. Section 2 presents some basic notions of fuzzy set theory and properties of the subclass of fuzzy number $\RFA$. 
A brief recall of the calculus theory for $\RFA$-valued functions is provided in this section.
In Section 3, we present an order relation on $\RFA$ and states some of its properties with respect to the  $\Psi$-cross product. 
Sufficient and necessary optimality conditions regarding the proposed order relation is provided in  Section 4. In Section 5, we  present the main results of this first article which includes a fuzzy version of the well-known  Lagrange and du Bois-Reymond lemmas that will serve as a basis for establishing a theory of fuzzy variational calculus in subsequent articles of this series of articles.  
Finally, we conclude this paper present some final comments and concluding remarks in the last section.  

\section{Mathematical Background on Fuzzy Sets and Calculus on  $\RFA$}\label{basic}

A fuzzy (sub)set $A$ of the universe $X\neq\emptyset$ is described by its membership function 
$\mu_{_A}:X\longrightarrow [0,1]$, where $\mu_{A}(x)$ means the degree to which $x$ belongs to $A$. For notational convenience, we write $A(x)$ instead of $\mu_A(x)$. 
The $\alpha$-levels of the fuzzy subset $A$ are classical subsets defined as follows: 
	$[A]_{\alpha}=\{x\in X:
A(x)\geq \alpha \}$ for  $0<\alpha\leq 1$ and, when $X$ is a topological space, 
$[A]_0=\overline{\{x\in X:A(x)>0\}}$, where $\overline{Y}$ stands for the closure of $Y \subseteq X$.
Here, we focus on a particular classes of fuzzy sets of $\RR$ called fuzzy numbers, which will be denoted by $\RF$.

\begin{definition}[Fuzzy number] \cite{de2017first,esmi2021solutions} 
The fuzzy subset $A$ of $\mathbb{R}$ is a  fuzzy number if 
	\begin{itemize}
		\item [i.] all their $\alpha$-levels are closed and nonempty intervals of $\mathbb{R}$, and
		\item[ii.] the support of $A$, $supp(A)=\{x\in \mathbb{R}:A(x)>0\}$, is bounded.
	\end{itemize}
\end{definition}

For every 
$\alpha \in [0,1]$, we denote the $\alpha$-level of $A \in \RF$  by $[A]_{\alpha} := [\underline{a}_\alpha, \overline{a}_\alpha]$. 
Note that every real number $r$ can be regarded as a fuzzy number such that its $\alpha$-level is given by $[r,r]$ for all $\alpha \in [0,1].$ Therefore, the set of real numbers can be embedded in $\RF$ and we simply denote it by the symbol $\RR \subset \RF$. 



For every $C,D \in \RF$ and $\lambda \in \RR$, the usual sum and product of $C$ and $D$ and the product of $\lambda$ and $C$ are  defined level-wise respecatively by:
\begin{equation}\label{somausual}
[C + D]_\alpha = [\underline{c}_\alpha + \underline{d}_\alpha, \overline{c}_\alpha + \overline{d}_\alpha],
\end{equation}
\begin{equation}\label{produtousual}
[C \cdot D]_\alpha = [\min P_\alpha, \max P_\alpha],
\end{equation}
and 
\begin{equation}\label{prodescalarusual}
[\lambda C]_\alpha = \left\lbrace \begin{array}{cl}
 \left[\lambda \underline{c}_\alpha, \lambda\overline{c}_\alpha\right]    &, \mbox{ if }  \lambda \geq 0, \\
 \left[\lambda \overline{c}_\alpha, \lambda\underline{c}_\alpha\right]    &, \mbox{ if }  \lambda < 0. 
\end{array}\right.     
\end{equation}
 where $P_\alpha = \{\underline{c}_\alpha\underline{d}_\alpha, \underline{c}_\alpha\overline{d}_\alpha,
\overline{c}_\alpha\underline{d}_\alpha,\overline{c}_\alpha\overline{d}_\alpha\}.$ 

The class of fuzzy numbers with these operations does not form a vector space because every 
$B \in \RF \setminus \RR$ does not have an opposite (i.e. an additive inverse) and the distributive 
equality $(\lambda + \mu)B = \lambda B + \mu B$, $\lambda, \mu \in \RR$, holds true only if $\mu\lambda \geq 0$. Below, we recall the subclass $\RFA$ of $\RF$ for which, under weak conditions, forms a vector space with a different notion of addition.  

\subsection{The Space $\RFA$}

For every fuzzy number $A$, we can consider the space $\RFA$ defined as follows:
\begin{equation}
\RFA = \{r + qA \mid r,q \in \RR \}.
\end{equation}
In \cite{esmi2018frechet}, authors have proved that the mapping $\Psi$ given by $(r,q)\mapsto \Psi(r,q) = r + qA$ is injective if, and only if, $A$ satisfies the property of asymmetry.
A fuzzy number $A$ is said to be asymmetric (or non-symmetric) if for all $x \in \RR$ there exists $y \in \RR$ such that $A(x-y) \neq A(x+y)$. The following theorem states that  $\RFA$ forms a vector space with the addition operation induced by the mapping $\Psi$. 

\begin{theorem} \cite{esmi2018frechet} \label{thm:rfa_space}
Let $A$ be an asymmetric fuzzy number. The set $\RFA$ is a real vector space with the usual scalar product and the addition operation given as follows. 
For every $B=(r_B + q_BA) \in \RFA$ and $C=(r_C + q_CA) \in \RFA$, the sum of $B$ and $C$ 
is an element $B \oplus C$ of $\RFA$ defined by 
\begin{equation}
B \oplus C  = \Psi\left(\Psi^{-1}(B) +  \Psi^{-1}(C) \right) = (r_B+r_C) + (q_B+q_C)A.     
\end{equation}

Moreover, $\RFA$ is isomorphic to $\RR^2$ with the isomorphism $(r,q)\mapsto r + qA$. 
\end{theorem}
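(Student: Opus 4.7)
The plan is to prove this by transport of structure along $\Psi$. I would first invoke the result from \cite{esmi2018frechet} stating that asymmetry of $A$ is equivalent to injectivity of the map $\Psi : \RR^2 \to \RFA$, $(r,q) \mapsto r + qA$; surjectivity onto $\RFA$ is immediate from the very definition of $\RFA$. Hence $\Psi$ is a bijection, and I would simply declare $\oplus$ on $\RFA$ to be the pullback of the additive structure of $\RR^2$ through $\Psi$, i.e., $B \oplus C := \Psi\bigl(\Psi^{-1}(B) + \Psi^{-1}(C)\bigr)$. With this convention, every vector-space axiom transfers tautologically from $\RR^2$, and $\Psi$ becomes a linear isomorphism by construction, which is exactly the second assertion of the theorem.

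For the explicit addition formula stated in the theorem I would just compute: writing $B = r_B + q_B A$ and $C = r_C + q_C A$ gives $\Psi^{-1}(B) = (r_B, q_B)$ and $\Psi^{-1}(C) = (r_C, q_C)$, so $B \oplus C = \Psi(r_B + r_C,\, q_B + q_C) = (r_B + r_C) + (q_B + q_C)A$. Note that well-definedness of this expression, i.e., independence from the chosen representatives $(r_B,q_B)$ and $(r_C,q_C)$, is precisely the injectivity of $\Psi$.

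The only step requiring genuine verification is that the pulled-back scalar multiplication agrees with the \emph{usual} level-wise scalar product \eqref{prodescalarusual} when restricted to $\RFA$, so that no notational conflict arises when we write $\lambda B$ for $B \in \RFA \subset \RF$. Concretely, I would check the identity $\lambda(r + qA) = \lambda r + (\lambda q) A$ in $\RF$ by splitting into the four sign-cases for $(\lambda, q)$ and comparing the endpoints of the $\alpha$-levels on each side using \eqref{somausual} and \eqref{prodescalarusual}; in each case the resulting intervals coincide, and from this closure of $\RFA$ under the usual scalar product follows at once.

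The main obstacle is conceptual rather than computational: one must be alert that $\oplus$ does \emph{not} coincide with the level-wise sum of \eqref{somausual}, since for any non-crisp $A$ one has $A + (-A) \neq 0$ in $\RF$ while $A \oplus (-A) = 0$ in $\RFA$. Consequently the vector-space structure cannot be inherited from $\RF$ as a subspace, and the bijectivity of $\Psi$ supplied by the asymmetry of $A$ is exactly what is needed to make $\oplus$ a single-valued, well-defined operation.
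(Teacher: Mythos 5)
Your proof is correct; the paper states this theorem as a citation to \cite{esmi2018frechet} and gives no proof of its own, but your transport-of-structure argument via the bijection $\Psi$ is exactly the intended route, and you correctly isolate the one nontrivial verification — that the usual level-wise scalar product on $\RFA$ coincides with the one induced by $\Psi$ — which the paper itself singles out in the remark immediately following the theorem. Your closing observation that $\oplus$ cannot be the level-wise sum (since $A + (-A) \neq 0$ in $\RF$) is also the right conceptual point and matches the paper's motivation for introducing $\oplus$ in the first place.
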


Theorem \ref{thm:rfa_space} ensures that $\RFA$ is a 2-dimension vector space.  
Furthermore, it is worth noting that one can easily verify that for $\lambda \in \RR$ and $B \in \RFA$ we have $\lambda B =  \Psi\left(\lambda \Psi^{-1}(B)\right).$ 
In other words, the usual scalar product restrict to $\RFA$ coincides with the scalar product induced by $\Psi$.

For the propose of this work, we only focus on the case the space $\RFA$ forms a real vector space.
Therefore, from now on, unless otherwise stated, we assume that the fuzzy number $A$ that generates the space $\RFA$ is asymmetric. It is worth noting that, from a practical point of view, the assumption of asymmetry is not a strong condition but indeed a weaker one since the set of asymmetric fuzzy numbers is open and dense in $\RF$ w.r.t. the Hausdorff metric \cite{esmi2021banach}.
In addition, as usual in vector spaces, for every $C, B \in \RFA$, the additive inverse (or the opposite) of $B$ is given by $-B = (-1)B$ and the difference of $C$ and $B$ is defined 
by $C\ominus B = C \oplus (-B)$.

The space $\RFA$ is not closed under the usual multiplication operation on fuzzy multiplication. 
In order to equip $\RFA$ with a multiplication operation,  
Longo {\it et al.} defined the $\Psi$-cross product between two elements of $\RFA$ whenever the $1$-level of $A$ is an unit set \cite{longo2022cross}.

\begin{definition}\label{def:multiplication} \cite{longo2022cross}
For every $A \in \RF$, with $[A]_1 = \{a\}$, and $B, C \in \RFA$ with $[B]_1 = \{b\}$ and $[C]_1 = \{c\}$. The $\Psi$-cross product of $B$ and $C$ is defined by
\begin{equation}\label{eq:cross_product_original}
B \odot C = cB + bC - bc. 
\end{equation}
\end{definition}

The $\Psi$-cross product of two elements of $B,C \in \RFA$ with $[B]_1 = \{b\}$ and 
$[C]_1 = \{c\}$ can be seen as extension to $\RFA \times \RFA$ of the linear approximation of the product function around the pair $(b,c)$. More precise, let $f(x,y) = xy$ for all $x,y \in \RR$, the linear approximation of $f$ at $(b,c)$ is the first-order Taylor polynomial of $f$ around $(b,c)$ given by the function $g(x,y) = bc + c(x-b) + b(y-c) = cx + by - bc$. 
If $[A]_1 = \{a_m\}$, $B=r_B + q_BA \in \RFA$, and $C=r_C + q_CA \in \RFA$, then 
the $\Psi$-cross product of $B$ and $C$ can be rewritten as follows \cite{longo2022cross}:
\begin{equation}\label{eq:cross_product}
B \odot C = (r_Br_C - a_m^2q_Bq_C) + (r_Bq_C + r_Cq_B + 2a_mq_Bq_C)A. 
\end{equation}

From now on, unless stated otherwise, we assume that the asymmetric fuzzy number $A$ that generates the space $\RFA$ is such that $[A]_1 =\{a_m\}$. In other words, we assume that the $\Psi$-cross product is well-defined on $\RFA$. 

We finish this section, pointing out that, from a modelling point of view, a fuzzy number $A$ such that $a_m = 0$ can be interpreted a fuzzy noise centered at zero. In this case, one can understand an element $B = (r +qA) \in \RFA$ as fuzzy number ``around $r$'' since $B$ is the sum the deterministic value $r$ and the fuzzy noise $qA$, where $q$ is a factor that modulates the length of the (fuzzy) noise around $r$. This kind of interpretation has a clear parallel with the additive models found in statistics.
The assumption that $A$ is centered at zero  
is not a strong assumption, since for every $\tilde{A} \in \RF$ the fuzzy number 
$A = -\tilde{a}_m + \tilde{A}$, with $\tilde{a}_m = 0.5(\underline{\tilde{a}}_1 + \overline{\tilde{a}}_1)$, is centered at zero and belongs to $\mathbb{R}_{\mathcal{F(\tilde{A})}}$ with $\RFA = \mathbb{R}_{\mathcal{F(\tilde{A})}}$ \cite{esmi2018frechet}.

\subsection{Calculus on $\RFA$-valued Functions}

In what follows, we shall equip the space $\RFA$ with a notion of norm that can be related with the notion of order propose in the next section (see Lemma \ref{lem:first_condition}). The following theorem provides another notion of norm on $\RFA$ from that  presented in \cite{esmi2018frechet}. 

\begin{theorem}\label{thm:norm}
Let $A$ be an asymmetric fuzzy number and $a_m = 0.5(\underline{a}_1 + \overline{a}_1)$. The space $\RFA$ is a normed space with the norm $\| \cdot \|: \RFA \to \RR$ given for every $B = (r + qA) \in \RFA$ as follows:
\begin{equation}\label{eq:norm}
\|B\| = |q| + |r + qa_m|.    
\end{equation}
\end{theorem}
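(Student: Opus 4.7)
The plan is to verify the four defining properties of a norm by exploiting the vector space isomorphism with $\RR^2$ provided by Theorem~\ref{thm:rfa_space}. Since $A$ is asymmetric, the map $\Psi^{-1} : \RFA \to \RR^2$ sending $B = r + qA$ to $(r,q)$ is a well-defined linear bijection, which will make non-degeneracy and homogeneity essentially immediate.

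First I would observe that the proposed norm can be written as $\|B\| = \|T(B)\|_1$, where $T: \RFA \to \RR^2$ is the linear map $T(r+qA) = (q,\, r + q a_m)$ and $\|\cdot\|_1$ is the standard $\ell^1$-norm on $\RR^2$. The linear map $T$ is a bijection: it factors through $\Psi^{-1}$ followed by the linear map $(r,q) \mapsto (q, r+qa_m)$, whose determinant is $-1$. Once this observation is made, the four norm axioms transfer from $\|\cdot\|_1$ by pullback.

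Concretely, I would verify the axioms directly:
\begin{itemize}
\item[(i)] Non-negativity $\|B\| \geq 0$ is immediate from the definition.
\item[(ii)] If $\|B\| = |q| + |r + qa_m| = 0$, then $q = 0$ and $r + qa_m = 0$, forcing $r = q = 0$; since $\Psi$ is injective (asymmetry of $A$), this gives $B = \Psi(0,0) = 0$. The converse is obvious.
\item[(iii)] For $\lambda \in \RR$, using $\lambda B = (\lambda r) + (\lambda q) A$, we get
\begin{equation*}
\|\lambda B\| = |\lambda q| + |\lambda r + \lambda q a_m| = |\lambda|\bigl(|q| + |r + qa_m|\bigr) = |\lambda|\,\|B\|.
\end{equation*}
\item[(iv)] For $B = r_B + q_B A$ and $C = r_C + q_C A$, Theorem~\ref{thm:rfa_space} gives $B \oplus C = (r_B+r_C) + (q_B+q_C)A$, so the triangle inequality in $\RR$ yields
\begin{equation*}
\|B \oplus C\| = |q_B+q_C| + |(r_B+r_C) + (q_B+q_C)a_m| \leq \|B\| + \|C\|.
\end{equation*}
\end{itemize}

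I do not anticipate any real obstacle: the argument is almost entirely a bookkeeping exercise once one notices that $\|\cdot\|$ is the pullback of the $\ell^1$-norm on $\RR^2$ via a linear isomorphism. The only subtle point worth making explicit is the use of the asymmetry of $A$ in step~(ii), which is what guarantees the injectivity of $\Psi$ and thus that $q = r = 0$ forces $B$ itself (not just its coordinate pair) to be the zero element of $\RFA$.
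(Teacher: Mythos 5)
Your proof is correct and follows essentially the same route as the paper's: a direct verification of the norm axioms using the unique representation $B = r + qA$ guaranteed by the asymmetry of $A$, with the same computations for homogeneity and the triangle inequality. The additional observation that $\|\cdot\|$ is the pullback of the $\ell^1$-norm on $\RR^2$ along a linear isomorphism is a nice conceptual framing not made explicit in the paper, but the substance of the argument is identical.
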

\begin{proof}
First of all, Equation \eqref{eq:norm} is well-defined, since for every $B \in \RFA$ there exist unique $r$ and $q$ such that $B = r + qA$. 

By definition, we have that $\|B\| \geq 0$ for all $B \in \RFA$. Moreover, if $\|B\| = 0$, then  
$|q|=0$ and $|r + qa_m| = 0$. The first equality ($|q|=0$) implies that $q=0$. This also implies that $r=0$ because the second equality becomes $|r| = 0$. Therefore, $\|B\| = 0 \Leftrightarrow B = 0$. 

For $\lambda \in \RR$, we have 
\begin{eqnarray*}
\| \lambda B \| & = & \| (\lambda r) + (\lambda q) A \| \\
 & = & |\lambda q| +  |\lambda r +  \lambda q a_m | \\
 & = & |\lambda|(|q| +  |r +  q a_m | = |\lambda|\|B\|.
\end{eqnarray*}

Let $C = s + pA$. It follows that 
\begin{eqnarray*}
\| B \oplus C \| & = & \| (r + s) + (q + p) A \| \\
 & = & |q + p| +  |(r + s) + (q + p) a_m | \\
 & \leq & |q| + |p| + |r + qa_m | + |s + pa_m| = \|B\| + \|C\|. 
\end{eqnarray*}
\end{proof}

Since the concept of norm induces a metric on the 2-dimension Banach space $\RFA$, the concepts and notations of convergence, limit, and continuity on $\RFA$ are defined as usual for any Banach space. 

Below, we recall the definition of derivative for $\RFA$-valued functions. 

\begin{definition}\label{def:derivative}
Given $f:I\to \RFA$ with $I=(a,b) \subset \RR$. We say that $f$ is differentiable at $t \in I$ if there exists $f'(t) \in \RFA$ such that 
\begin{equation}
 \lim_{h\to 0} \frac{f(t+h) \ominus f(t)}{h} = f'(t).   
\end{equation}
In this case, $f'(t)$ is said to be the derivative of $f$ at $t$. If $f$ is differentiable at all points of $I$, then we say that $f$ is differentiable in $I$. 
\end{definition}

A well-known fact is that every norm in a finite dimension Banach space are equivalent. 
Thus, the norm defined in Equation \eqref{eq:norm} is equivalent to the norm provided in \cite{esmi2018frechet}. It means that the convergence of a sequence or a function at some point w.r.t. one of these two norm implies in the converge w.r.t. the other norm. Moreover, it converges to the same limit in both norms. As a consequence all results obtained with any other norm on $\RFA$ also hold for the proposed norm.  For example, from this equivalence and from Corollary 5 in \cite{esmi2020calculus}, we obtain the following proposition. 

\begin{proposition}\label{prop:derivative}
Let $f:I\to \RFA$ be such that $f(t) = r(t) + q(t)A$ for all $t\in I$, where $r,q:I\to \RR$ and $I=(a,b) \subset \RR$. The function $f$ is differentiable at $t \in I$ if, and only if, $r$ and $q$ are differentiable at $t$. Moreover, we have that 
\begin{equation}
f'(t) = r'(t) + q'(t)A.     
\end{equation}
\end{proposition}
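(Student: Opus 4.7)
The plan is to prove both directions directly by unpacking the difference quotient in the vector space $\RFA$ and exploiting the explicit form of the norm from Theorem \ref{thm:norm}, rather than appealing to the abstract equivalence of norms on finite-dimensional spaces. The key observation that makes everything clean is that, by Theorem \ref{thm:rfa_space}, $\RFA$ is a genuine vector space, so the ``fuzzy difference'' $\ominus$ is just the ordinary vector space subtraction. Hence for $f(t) = r(t) + q(t)A$ one has
\begin{equation*}
\frac{f(t+h)\ominus f(t)}{h} \;=\; \frac{r(t+h)-r(t)}{h} \;+\; \frac{q(t+h)-q(t)}{h}\,A.
\end{equation*}

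For the ``if'' direction I would assume $r$ and $q$ are differentiable at $t$, pass the limit through the right-hand side above by using the continuity of scalar multiplication and of $\oplus$ in the normed space $\RFA$ (Theorem \ref{thm:norm}), and read off that $f'(t)$ exists and equals $r'(t) + q'(t)A$.

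For the ``only if'' direction I would suppose $f$ is differentiable at $t$ with $f'(t) = \alpha + \beta A$ for some unique $\alpha,\beta \in \RR$ (uniqueness comes from the injectivity of $\Psi$, which is guaranteed by the asymmetry of $A$). Writing out the norm difference using the explicit formula $\|u + vA\| = |v| + |u + v a_m|$, the hypothesis $\lim_{h\to 0}\|h^{-1}(f(t+h)\ominus f(t)) - f'(t)\| = 0$ becomes
\begin{equation*}
\left|\tfrac{q(t+h)-q(t)}{h} - \beta\right| + \left|\tfrac{r(t+h)-r(t)}{h} - \alpha + \left(\tfrac{q(t+h)-q(t)}{h} - \beta\right)a_m\right| \;\longrightarrow\; 0.
\end{equation*}
Since both summands are nonnegative, each tends to $0$. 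The first immediately gives $q'(t) = \beta$. Substituting this into the second and using the triangle inequality then forces $(r(t+h)-r(t))/h \to \alpha$, i.e.\ $r'(t) = \alpha$. Combined with the already established formula, this proves both the equivalence and the identity $f'(t) = r'(t) + q'(t)A$.

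I do not expect any real obstacle: the only mildly delicate point is the second summand in the norm, which couples the $r$- and $q$-components through the term $a_m$. The remedy is simply to handle $q'$ first (from the decoupled $|q|$-term) and then feed this back to isolate $r'$; no further machinery beyond the triangle inequality and the asymmetry of $A$ (ensuring uniqueness of the representation $r + qA$) is needed.
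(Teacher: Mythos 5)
Your proof is correct, but it takes a genuinely different route from the paper. The paper does not argue directly at all: it simply observes that all norms on the finite-dimensional space $\RFA$ are equivalent, so convergence (and the value of the limit) is norm-independent, and then imports the statement wholesale from Corollary 5 of \cite{esmi2020calculus}, which was proved for the norm of \cite{esmi2018frechet}. You instead give a self-contained computation with the specific norm $\|u+vA\| = |v| + |u+va_m|$ of Theorem \ref{thm:norm}: you correctly reduce the difference quotient to its coordinates via the vector-space structure from Theorem \ref{thm:rfa_space}, and your handling of the ``only if'' direction --- extracting $q'(t)=\beta$ first from the decoupled $|v|$-term and then feeding it back through the triangle inequality to isolate $r'(t)=\alpha$ --- is exactly the right way to deal with the coupling through $a_m$ in the second summand. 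What the paper's approach buys is brevity and reuse of published results; what yours buys is a proof that is elementary, independent of the external reference, and makes explicit why the particular norm chosen here is compatible with coordinatewise differentiation, which is arguably more informative for a reader who has not seen \cite{esmi2020calculus}.
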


The notions and notations of partial derivates and derivative of higher orders for functions taking values in $\RFA$ can be naturally defined as in the classical case. 
Since $\RFA$ is a 2-dimmension Banach space, we can also consider the well-establised notion of Riemman integral for functions from an interval of $\RR$ to a Banach space \cite{gordon79}. This integral regards as an operator on the space of integral functions satisfies many interesting properties such as linearity, continuidade, etc \cite{gordon79}. 
In the particular context of the functions from $[a,b]\subset \RR$ to $\RFA$, 
the precise definiton of Riemman integral can be found in 
 \cite{santo2020calculus,simoes2023interactive}. 
 An interesting and useful result is that the existence of this integral is caracterized by the existence of two integrals of real-valued funcitons. The following proposition clarifies this fact. 

\begin{proposition}\cite{santo2020calculus} \label{prop:integral}
Let $f:[a,b]\to \RFA$ be such that $f(t) = r(t) + q(t)A$ for all $t\in [a,b]$, where $r,q:[a,b]\to \RR$. The functin $f$ is integrable on $[a,b]$ if, and only if, $r$ and $q$ are integrable on $[a,b]$. Moreover, we have that 
\begin{equation}
\int_a^b f(t)dt = \left(\int_a^b r(t)dt\right) + \left(\int_a^b q(t)dt\right)A.     
\end{equation}
\end{proposition}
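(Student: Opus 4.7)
The plan is to exploit the linear isomorphism $\Psi:\RR^2\to \RFA$, $(r,q)\mapsto r+qA$, provided by Theorem \ref{thm:rfa_space}, together with the explicit norm from Theorem \ref{thm:norm}, and reduce the statement to the classical fact that Riemann integrability on a finite-dimensional normed space is equivalent to integrability of the coordinate components.

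First, I would record that for any tagged partition $P=\{a=t_0<\cdots<t_n=b\}$ with tags $\xi=(\xi_i)$, $\xi_i\in[t_{i-1},t_i]$, the linearity of the scalar product and of the addition $\oplus$ on $\RFA$ imply
\begin{equation*}
S(f,P,\xi) := \sum_{i=1}^n f(\xi_i)(t_i-t_{i-1}) = \left(\sum_{i=1}^n r(\xi_i)(t_i-t_{i-1})\right) + \left(\sum_{i=1}^n q(\xi_i)(t_i-t_{i-1})\right) A,
\end{equation*}
so the Riemann sum of $f$ is just $\Psi$ applied to the pair formed by the Riemann sums of $r$ and $q$. Fixing a candidate limit $L=R_0+Q_0A\in \RFA$ and invoking Theorem \ref{thm:norm}, I would then obtain the key identity
\begin{equation*}
\|S(f,P,\xi)\ominus L\| = |S(q,P,\xi)-Q_0| + \bigl|(S(r,P,\xi)-R_0) + (S(q,P,\xi)-Q_0)\,a_m\bigr|.
\end{equation*}

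From here both implications are immediate. For the $(\Leftarrow)$ direction, if $r$ and $q$ are integrable with integrals $R_0$ and $Q_0$, the triangle inequality bounds the right-hand side by $(1+|a_m|)|S(q,P,\xi)-Q_0| + |S(r,P,\xi)-R_0|$, which tends to zero as the mesh $\|P\|\to 0$, proving that $f$ is integrable with the claimed value. For the $(\Rightarrow)$ direction, the first summand forces $S(q,P,\xi)\to Q_0$, and once this is known the second summand combined with the triangle inequality forces $S(r,P,\xi)\to R_0$, yielding integrability of both real components with the asserted integrals.

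The main step of care, rather than a genuine obstacle, is merely to verify that the single-parameter mesh-based definition of Riemann integrability on the Banach space $\RFA$ (in the sense of \cite{gordon79}) translates correctly into joint convergence of the two real-valued component sums under the specific norm of Theorem \ref{thm:norm}. Since $\Psi$ is a linear homeomorphism onto a $2$-dimensional space, on which all norms are equivalent, this equivalence is automatic from the Riemann sum identity displayed above.
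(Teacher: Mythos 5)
Your argument is correct. Note that the paper itself offers no proof of Proposition \ref{prop:integral}; it is imported verbatim from \cite{santo2020calculus}, so there is no in-paper argument to compare against. Your route — writing the Riemann sum of $f$ as $\Psi$ applied to the pair of Riemann sums of $r$ and $q$, and then reading off both implications from the explicit formula $\|S(f,P,\xi)\ominus L\| = |S(q,P,\xi)-Q_0| + |(S(r,P,\xi)-R_0) + (S(q,P,\xi)-Q_0)\,a_m|$ — is the natural one and is sound: both summands are non-negative, so the norm tends to zero if and only if each does, and the triangle inequality converts this into convergence of the two component sums. The only hypothesis you use implicitly and should flag is the asymmetry of $A$, which guarantees that $\Psi$ is injective and hence that the candidate limit $L$ decomposes \emph{uniquely} as $R_0+Q_0A$; without that uniqueness the $(\Rightarrow)$ direction would not pin down the integrals of $r$ and $q$. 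Since asymmetry is a standing assumption of the paper from Section \ref{basic} onward, this is a presentational point rather than a gap.
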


Similarly to the crisp case, the symbols $\textnormal{C}\left([a,b],\RFA\right)$, 
$ \textnormal{C}{}^k\left([a,b],\RFA\right)$, 
$\textnormal{AC}\left([a,b],\RFA\right)$, and
$\textnormal{Lip}\left([a,b],\RFA\right)$
stands to the classes of functions from $[a,b]$ to $\RFA$ that are 
continuous, $k$ times continuously  differentiable, absolute continuous, and Lipschitz, respectively.  

Based on Propositions \ref{prop:derivative} and \ref{prop:integral}, one can easily prove the 
linearity properties for the derivative and integral operators and the fuzzy version of the fundamental calculus theorems. Here, as in classical case, the arithmetic operations between $\RFA$-valued functions are defined pointwise.  

\begin{theorem} \cite{santo2020calculus,shen2022calculus}
Let $A$ be an asymmetric fuzzy number, $f,g:[a,b]\to \RFA$ and $\lambda \in \RR$. 
The following statements are valid:
\begin{itemize}
\item[a)] If $f,g$ are differentiable in $(a,b)$, then 
$$
(f \oplus \lambda g)'(t) = f'(t) \oplus \lambda g'(t), \qquad \forall t \in (a,b).
$$
\item[b)] If $f,g$ are integrable in $[a,b]$, then 
$$
\int_a^b f(t) \oplus \lambda g(t)dt = \int_a^b f(t) dt \oplus \lambda \int_a^b g(t)dt.
$$
\item[c)] If $f$ is integrable in $[a,b]$, then the function $F(t) = \int_a^t f(s)ds$ is continuous 
and $F'(t) = f(t)$ almost everywhere in $[a,b]$. 
\item[d)] If $f$ is differentiable in $(a,b)$ and $f'$ is integrable in $[a,b]$, then 
$$
\int_a^b f'(t) dt = f(b) \ominus f(a) = f(y)\Big|_a^b .
$$
\end{itemize}
\end{theorem}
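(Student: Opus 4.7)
The plan is to reduce each of the four statements to the classical real-valued case by writing $f(t)=r_f(t)+q_f(t)A$ and $g(t)=r_g(t)+q_g(t)A$ with $r_f,q_f,r_g,q_g:[a,b]\to\RR$, and then invoking Propositions \ref{prop:derivative} and \ref{prop:integral} to lift the known real-valued linearity and fundamental theorems of calculus to $\RFA$-valued functions. The uniqueness of the decomposition (guaranteed by the asymmetry of $A$) is what makes this reduction rigorous, and the definitions of $\oplus$ and scalar multiplication on $\RFA$ are precisely what preserve the componentwise structure.

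For part (a), I would first compute $(f\oplus\lambda g)(t)=(r_f(t)+\lambda r_g(t))+(q_f(t)+\lambda q_g(t))A$ from the formula for $\oplus$ in Theorem \ref{thm:rfa_space} together with the scalar multiplication on $\RFA$. Since $r_f,q_f,r_g,q_g$ are real-valued and differentiable at $t$ (this follows from Proposition \ref{prop:derivative} applied to $f$ and $g$), classical linearity gives $(r_f+\lambda r_g)'(t)=r_f'(t)+\lambda r_g'(t)$ and similarly for the $q$-components. Applying Proposition \ref{prop:derivative} in the other direction yields $(f\oplus\lambda g)'(t)=(r_f'(t)+\lambda r_g'(t))+(q_f'(t)+\lambda q_g'(t))A=f'(t)\oplus\lambda g'(t)$. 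Part (b) is handled identically, replacing Proposition \ref{prop:derivative} by Proposition \ref{prop:integral} and derivative by integral throughout.

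For part (c), Proposition \ref{prop:integral} gives $F(t)=R(t)+Q(t)A$ where $R(t)=\int_a^t r(s)ds$ and $Q(t)=\int_a^t q(s)ds$. Both $R$ and $Q$ are continuous on $[a,b]$ by the classical theory, hence $F$ is continuous in the norm of Theorem \ref{thm:norm} by the formula \eqref{eq:norm} (a straightforward bound on $\|F(t)\ominus F(t_0)\|$ in terms of $|R(t)-R(t_0)|$ and $|Q(t)-Q(t_0)|$). By the classical Lebesgue differentiation theorem, $R'(t)=r(t)$ and $Q'(t)=q(t)$ almost everywhere in $[a,b]$; Proposition \ref{prop:derivative} then assembles these into $F'(t)=r(t)+q(t)A=f(t)$ almost everywhere.

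For part (d), Proposition \ref{prop:derivative} gives $f'(t)=r'(t)+q'(t)A$, and then Proposition \ref{prop:integral} yields $\int_a^b f'(t)\,dt=\bigl(\int_a^b r'(t)\,dt\bigr)+\bigl(\int_a^b q'(t)\,dt\bigr)A$. The classical fundamental theorem of calculus (valid since $r'$ and $q'$ are integrable) gives $\int_a^b r'(t)\,dt=r(b)-r(a)$ and similarly for $q$. Unwinding the definition of $\ominus$ in $\RFA$, the right-hand side equals $\bigl(r(b)+q(b)A\bigr)\ominus\bigl(r(a)+q(a)A\bigr)=f(b)\ominus f(a)$. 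There is no real obstacle here; the only subtlety worth flagging is the need to verify that the integrability hypotheses on $f'$ and on $f,g$ transfer to their real components via Proposition \ref{prop:integral} before the classical results can be applied, but this is immediate from the iff statement in that proposition.
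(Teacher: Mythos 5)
Your proposal is correct and follows exactly the route the paper intends: the theorem is stated as a cited result, and the paper's only remark on its proof is that it follows from Propositions \ref{prop:derivative} and \ref{prop:integral} by componentwise reduction to the classical real-valued case, which is precisely what you carry out. No gaps.
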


The following theorem presents the Product rule (or Leibniz rule) and 
Integration by parts with respect to $\Psi$-cross product.  

\begin{theorem}\cite{simoes2023interactive}
Let $A$ be an asymmetric fuzzy number and $f,g:[a,b]\to \RFA$. 
The following statements are valid:
\begin{itemize}
\item {\bf Product Rule:} If $f,g$ are differentiable in $(a,b)$, then 
$$
(f\odot g)'(t) = f(t) \odot g'(t) \oplus f'(t) \odot g(t), \quad \forall t \in (a,b).
$$
\item {\bf Integration by parts:} If $f,g$ are differentiable in $(a,b)$ with $f,g,f',g'$ integrable in $[a,b]$, then 
$$
\int_a^b f(t)\odot g'(t)dt = f(t) \odot g(t) \Big|_a^b - \int_a^b f'(t)\odot g(t)dt.
$$
\end{itemize}
\end{theorem}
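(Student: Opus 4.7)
The plan is to reduce both claims to elementary real-calculus identities via the coordinate representation $\Psi^{-1}$ and the explicit polynomial formula for $\odot$ given in equation \eqref{eq:cross_product}.

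For the Product Rule, I would first write $f(t) = r_f(t) + q_f(t)A$ and $g(t) = r_g(t) + q_g(t)A$. By Proposition \ref{prop:derivative}, the differentiability of $f$ and $g$ at $t$ is equivalent to the differentiability of the four real-valued coordinate functions $r_f, q_f, r_g, q_g$. Equation \eqref{eq:cross_product} then expresses $f(t)\odot g(t)$ as an element of $\RFA$ whose real coordinates are $R(t) := r_f(t)r_g(t) - a_m^2 q_f(t)q_g(t)$ and $Q(t) := r_f(t)q_g(t) + r_g(t)q_f(t) + 2a_m q_f(t)q_g(t)$. Each is a polynomial in differentiable real-valued functions, so the classical Leibniz rule gives $R'(t)$ and $Q'(t)$, and Proposition \ref{prop:derivative} (in the converse direction) yields that $f\odot g$ is differentiable with $(f\odot g)'(t) = R'(t) + Q'(t)A$. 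To close the argument, I would compute the coordinates of $f(t)\odot g'(t) \oplus f'(t)\odot g(t)$ directly from \eqref{eq:cross_product} using $f'(t) = r_f'(t) + q_f'(t)A$ and $g'(t) = r_g'(t) + q_g'(t)A$, and verify term-by-term that they coincide with $R'(t)$ and $Q'(t)$. This is bookkeeping: the cross terms $-a_m^2(q_f q_g)'$ and $2a_m(q_f q_g)'$ split correctly under the real-valued product rule.

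For Integration by Parts, my plan is to combine the Product Rule just established with item (d) (fundamental theorem) and item (b) (linearity of the integral) from the preceding theorem. The integrability of $f, g, f', g'$, together with the polynomial structure of the coordinates of $f\odot g'$ and $f'\odot g$ given by \eqref{eq:cross_product}, yields via Proposition \ref{prop:integral} that these $\odot$-products are integrable on $[a,b]$. Applying item (d) to $F(t) := f(t)\odot g(t)$, whose derivative is supplied by the Product Rule, gives $\int_a^b (f\odot g)'(t)\,dt = f(t)\odot g(t)\big|_a^b$. Expanding the integrand via the Product Rule and item (b) produces $\int_a^b f(t)\odot g'(t)\,dt \oplus \int_a^b f'(t)\odot g(t)\,dt$, and rearranging with the vector-space subtraction $\ominus$ yields the desired identity.

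The main obstacle I anticipate is not conceptual but technical: verifying that the integrability of $f, g, f', g'$ transfers to the products $f\odot g'$ and $f'\odot g$, since the hypothesis is stated only for the individual functions. In the customary setting where the derivatives are continuous on the compact interval $[a,b]$, boundedness is automatic and the coordinate products are clearly integrable; in the fully general case, I would justify integrability componentwise by appealing to Proposition \ref{prop:integral} together with the explicit real-valued coordinate formulas for $\odot$, reducing everything to the corresponding classical fact that a product of a bounded measurable function and an integrable function is integrable.
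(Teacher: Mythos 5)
Your proposal is correct. Note that the paper itself gives no proof of this theorem---it is stated as a cited result from the reference on the interactive Riemann integral---so there is nothing internal to compare against; your coordinate-wise reduction via $\Psi^{-1}$, Proposition \ref{prop:derivative}, Proposition \ref{prop:integral}, and the explicit formula \eqref{eq:cross_product} is exactly the standard route by which such identities are established in that literature, and your term-by-term verification of the product rule does check out. On the one technical point you flag: the cleanest justification for the integrability of $f\odot g'$ and $f'\odot g$ is that each real coordinate is a sum of products of two Riemann-integrable functions (the coordinates of $f,g$ are integrable and, being differentiable on $(a,b)$, continuous there), and a product of two Riemann-integrable functions is Riemann integrable; the phrasing ``bounded measurable times integrable'' is not quite the right invocation in the Riemann setting, though the conclusion you need still holds.
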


\section{Order Relation on $\RFA$}

Minimal and maximal elements are basic concepts in optmization/variational problems that arise from a given order relation. The concept of order relation on a nonempty set $X$ is nothing more than a (binary) relation on $X$, that is, a subset of $X\times X$, which satisfies the conditions of the 
definition below. In addition, an order relation is usually denoted by $\leq$ and the symbol $x \leq y$ means that $(x,y)$ belongs to the relation $\leq$ and it is read as ``$x$ is less or equal to $y$''. If $(x,y) \not\in \leq$, we say that ``$x$ is not less or equal to $y$'' and this fact is denoted by $x\not\leq y$.

\begin{definition}\cite{birkhoff93}\label{def:order}
An order relation on a nonempty set $X$ is a relation on $X$ that satisfies the following three properties for all $x,y,z \in X$:
\begin{itemize}
    \item[i)] $x \leq x$;
    \item[ii)] if $x \leq y$ and $y \leq x$, then $x=y$;
    \item[iii)] $x \leq y$ and $y \leq z$, then $x\leq z$.
\end{itemize}

If for every $x,y \in X$ we have $x\leq y$ or $y \leq x$, then $X$ is said to be totally ordered or a chain, otherwise, $X$ is said to be partially ordered or, for short, a poset. 
\end{definition}

Well-known examples of chains are the set of real numbers with the usual order and the power set of a given set with the inclusion order. The following proposition provides a order relation on $\RFA$ for which $\RFA$ becomes a totally ordered set.

\begin{proposition}\label{Proorder}
Let $A$ be an asymmetric fuzzy number and $a_m = 0.5(\underline{a}_1 + \overline{a}_1)$. The space $\RFA$ is a totally ordered set with the order relation given for every $B = (r_B + q_BA) \in \RFA$ and $C = (r_C + q_CA) \in \RFA$ by  
\begin{equation}\label{eq:order}
B \leq C \Leftrightarrow \left\lbrace \begin{array}{cc}
r_B + q_Ba_m < r_C + q_Ca_m, & \mbox(I) \\
     \mbox{ or }  & \\
r_B + q_Ba_m = r_C + q_Ca_m  \mbox{ and }  |q_B| < |q_C|, & \mbox(II) \\
     \mbox{ or } & \\
r_B + q_Ba_m = r_C + q_Ca_m  \mbox{ and }  |q_B| = |q_C|   \mbox{ and }  q_B \leq q_C & \mbox(III).
\end{array} \right.   
\end{equation}
\end{proposition}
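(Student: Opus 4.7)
The plan is to recognize the relation in \eqref{eq:order} as the pullback, under an injective map, of the standard lexicographic order on $\RR^{3}$, and then to invoke the fact that the lex order on a finite product of chains is itself a chain. Concretely, define $\phi : \RFA \to \RR^{3}$ by
\[
\phi(r + qA) \;=\; \bigl(\, r + q\,a_m,\; |q|,\; q \,\bigr),
\]
and equip $\RR^{3}$ with the lexicographic order $\leq_{\mathrm{lex}}$ induced from $\leq$ on each coordinate. A direct inspection shows that the trichotomy (I)--(III) defining $\leq$ on $\RFA$ matches, line by line, the trichotomy produced when comparing two triples lexicographically, so that $B \leq C$ in $\RFA$ is literally the assertion $\phi(B) \leq_{\mathrm{lex}} \phi(C)$.

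First I would verify injectivity of $\phi$. Since $A$ is asymmetric, Theorem \ref{thm:rfa_space} guarantees that the representation $B = r_B + q_B A$ is unique, so it is enough to recover $(r_B,q_B)$ from $\phi(B)$: the third coordinate returns $q_B$ directly, and then the first coordinate gives $r_B = (r_B + q_B a_m) - q_B a_m$. With $\phi$ injective, each of the four required properties for a total order transfers automatically from $\leq_{\mathrm{lex}}$ to $\leq$. Reflexivity is immediate via case (III) applied with $q_B \leq q_B$; antisymmetry follows because $\phi(B) \leq_{\mathrm{lex}} \phi(C) \leq_{\mathrm{lex}} \phi(B)$ forces $\phi(B) = \phi(C)$ and then injectivity of $\phi$ yields $B = C$; transitivity is inherited from $\leq_{\mathrm{lex}}$; and totality on $\RFA$ reduces, coordinate by coordinate, to the totality of $\leq$ on $\RR$.

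The main obstacle — and essentially the only nontrivial bookkeeping — would appear if one argued directly from \eqref{eq:order} instead of through $\phi$, because chaining $B \leq C$ and $C \leq D$ a priori opens nine combinations among cases (I)--(III) to verify for transitivity. The reformulation via $\phi$ collapses all of this into the transparent statement that lex order on a product of chains is a chain, so that the proof reduces to (a) the one-line injectivity argument for $\phi$ and (b) the case-by-case check that the three clauses of \eqref{eq:order} exactly recover the three clauses of $\leq_{\mathrm{lex}}$ on $\RR^{3}$.
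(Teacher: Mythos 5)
Your proof is correct, and it takes a genuinely different route from the paper. The paper verifies reflexivity, antisymmetry, transitivity, and totality directly from the three clauses of \eqref{eq:order}, with transitivity handled by an explicit case analysis on which clause holds in each of the two hypotheses $B\leq C$ and $C\leq D$. You instead observe that \eqref{eq:order} is exactly the pullback under $\phi(r+qA)=(r+qa_m,\,|q|,\,q)$ of the lexicographic order on $\RR^3$, and then invoke two standard facts: the lexicographic order on a finite product of chains is a chain, and the pullback of a chain under an injection is a chain (injectivity being needed only for antisymmetry; reflexivity, transitivity, and totality transfer under any map). Your injectivity argument is sound, resting as it must on the uniqueness of the representation $B=r+qA$ guaranteed by the asymmetry of $A$ via Theorem \ref{thm:rfa_space} --- note this uniqueness is also what makes $\phi$ (and indeed the relation itself) well defined, a point worth stating explicitly. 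What your approach buys is a shorter and less error-prone argument that makes the lexicographic structure of the order transparent and would generalize immediately to refinements by further coordinates; what the paper's direct verification buys is self-containedness, since it does not presuppose the reader knows (or re-proves) the two facts about lexicographic orders and pullbacks that your argument outsources.
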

\begin{proof}
Let $B,C,D \in \RFA$ be such that $B = r_B + q_BA$, $C = r_C + q_CA$, and $C = r_C + q_CA$. 
Note that $B \leq B$ since the third condition of right side of \eqref{eq:order} is satisfied. 

Now, let us analyse Item ii) of Definition \ref{def:order}. 
Suppose that  $B\leq C$. Note that if  the first condition of the right side of \eqref{eq:order} is satisfied, then, we have that $C \not\leq B$. Hence, this case is not possible if the hypothesis of Item ii) of Definition \ref{def:order} is satisfied. Now, if the second condition of the right side of \eqref{eq:order} is satisfied, then, $C \not\leq B$. Again, in this case, the hypothesis of Item ii) can not be satisfied. 
Thus, the only possibility is that the third condition is satisfied if the hypothesis of Item ii) holds true. 
Using a similar reasoning for $C \leq B$, we conclude that 
the hypothesis that $B\leq C$ and $C \leq B$ occurs if, and only if, we have 
\[
\left\lbrace\begin{array}{c}
r_B + q_Ba_m = r_C + q_Ca_m  \mbox{ and }  |q_B| = |q_C|   \mbox{ and }  q_B \leq q_C  \\
r_B + q_Ba_m = r_C + q_Ca_m  \mbox{ and }  |q_B| = |q_C|   \mbox{ and }  q_C \leq q_B. 
\end{array}\right.
\]
This implies that $q_B = q_C$ and $r_B = r_C$. Therefore, we obtain that $B = C$.

Suppose that $B \leq C$ and $C \leq D$. 
Consider the case where $r_B + q_Ba_m < r_C + q_Ca_m$. Since $C \leq D$, we have that 
$r_C + q_Ca_m \leq r_D + q_Da_m$. Thus, we $B\leq D$ because $r_B + q_Ba_m < r_D + q_Da_m$.
Similarly, if $r_C + q_Ca_m < r_D + q_Da_m$, we obtain that $r_B + q_Ba_m < r_D + q_Da_m$ and, consequently, 
$B\leq D$. 
The remain case is the case where  
$r_B + q_Ba_m = r_C + q_Ca_m = r_D + q_Da_m$. From the second and third conditions in 
the right side of \eqref{eq:order}, we have that $|q_B| \leq |q_C| \leq |q_D|$.
Thus, if $|q_B| < |q_C|$ or $|q_C| < |q_D|$, from definition, we obtain that $B\leq D$. 
Finally, if $|q_B| = |q_C| = |q_D|$, then $q_B \leq q_C  \leq q_D$, which also implies that 
$B\leq D$. 

To prove that $\RFA$ is totally order with respect to this order relation, we have to show that one of the inequalities $B\leq C$ or $C \leq B$ holds. 
If $r_B + q_Ba_m < r_C + q_Ca_m$ (or $r_C + q_Ca_m < r_B + q_Ba_m$), then $B\leq C$ (or $C \leq B$).  
If $r_B + q_Ba_m = r_C + q_Ca_m$ and $|q_B| < |q_C|$ (or $|q_C| < |q_B|$ ), then $B\leq C$ (or $C \leq B$). 
For $r_B + q_Ba_m = r_C + q_Ca_m$ and $|q_B| = |q_C|$, we have that $B\leq C$ if $q_B \leq q_C$, or  
$C \leq B$ if $q_C \leq q_B$.
\end{proof}

The following corollary states that 
the inequality between a real number and an a fuzzy number in $\RFA$ can  be evaluated by a simple inequality. 

\begin{corollary}\label{cor:rel_order}
Under the conditions of Proposition \ref{Proorder}, for all $B = (r + qA) \in \RFA$  and $\lambda \in \RR$, we have
\begin{itemize}
    \item[(a)] $\lambda \leq B \Leftrightarrow \lambda \leq r + qa_m$;
    \item[(b)] $B \leq \lambda  \Leftrightarrow \lambda > r + qa_m$ or $B=\lambda$.    
\end{itemize}
\end{corollary}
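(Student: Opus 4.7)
The plan is to view the real number $\lambda$ as the element $\lambda + 0\cdot A \in \RFA$, so that in the notation of Proposition~\ref{Proorder} we have $r_\lambda = \lambda$ and $q_\lambda = 0$, whence $r_\lambda + q_\lambda a_m = \lambda$. Both equivalences then follow by mechanically unpacking the three-case definition \eqref{eq:order} and noting the simplifications that $q_\lambda = 0$ forces.

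For part (a), I would substitute $r_\lambda = \lambda$, $q_\lambda = 0$ into the three disjuncts that define $\lambda \leq B$. Case (I) reduces to $\lambda < r + qa_m$. Case (II) requires $\lambda = r + qa_m$ together with $0 = |q_\lambda| < |q|$, i.e.\ $\lambda = r + qa_m$ with $q \neq 0$. Case (III) requires $\lambda = r + qa_m$ with $|q_\lambda| = |q| = 0$ and $q_\lambda \leq q$, which forces $q = 0$ (and is automatic). Taking the disjunction of (II) and (III) gives exactly $\lambda = r + qa_m$ (with no constraint on $q$), and combining with (I) yields $\lambda \leq r + qa_m$. The converse direction is immediate by reversing the case analysis.

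For part (b), I would again plug into \eqref{eq:order} but now for $B \leq \lambda$, so the roles of $B$ and the real number are swapped. Case (I) becomes $r + qa_m < \lambda$. Case (II) demands $|q| < |q_\lambda| = 0$, which is impossible and so contributes nothing. Case (III) demands $r + qa_m = \lambda$ with $|q| = 0$, hence $q = 0$, which combined with $r = \lambda$ says precisely that $B = \lambda$. The disjunction of the surviving cases is therefore ``$\lambda > r + qa_m$ or $B = \lambda$,'' as claimed, and the converse is again a direct check.

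There is no real obstacle here; the statement is essentially a bookkeeping corollary of Proposition~\ref{Proorder}. The only subtlety worth being careful about is that case (II) of \eqref{eq:order} cannot be merged naively with case (III) when comparing against a crisp number, because the strictness of $|q_B| < |q_C|$ interacts differently depending on whether $\lambda$ sits on the left or the right of the inequality; this asymmetry is precisely what produces the asymmetry between statements (a) and (b).
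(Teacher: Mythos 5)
Your proposal is correct and follows essentially the same route as the paper: both arguments identify $\lambda$ with $\lambda + 0A$ and mechanically unpack the three cases of \eqref{eq:order}. The only cosmetic difference is in the converse of part (b), where the paper rules out $q \neq 0$ via antisymmetry of the order, while you observe directly that case (II) would require $|q| < 0$ and case (III) forces $q = 0$; your reading is slightly more direct but not a different argument.
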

\begin{proof}
First of all, note that $\lambda + 0A \in \RFA$. 
If $\lambda \leq B$, then one of the three conditions in \eqref{eq:order} is satisfied and all them imply that $\lambda \leq r + qa_m.$ 
One the other hand, suppose that $\lambda \leq r + qa_m.$ If the inequality is restrict, then 
the first condition of \eqref{eq:order} holds which yields that $\lambda \leq B$. Consider the case where $\lambda = r + qa_m.$ If $q\neq 0$, then $\lambda \leq B$ because the second condition is satisfied. Now, if $q = 0$, then the third condition holds since $\lambda + 0A$. Therefore, all cases imply that  $\lambda \leq B$. 

Consider $\lambda > r + qa_m$. In this case, we obtain $B \leq \lambda$ since the condition (I) of \eqref{eq:order} is satisfied. If $B=\lambda$, then, from Proposition \ref{Proorder}, we have that $B = \lambda \leq \lambda$. On the other hand, suppose that $B \leq \lambda$. If the condition (I) of \eqref{eq:order} is satisfied, then $\lambda > r + qa_m$. If $\lambda  =  r + qa_m$, we can not have $q\neq 0$ because this would imply that $\lambda \leq B$ with $\lambda \neq B$ which combined with the hypothesis $B \leq \lambda$ produces the following contradiction: $B = \lambda \neq B$. 
Thus, in this case, we must have $q=0$ and $\lambda  =  r + qa_m = r$ which implies that $B=\lambda.$
\end{proof}

An immediate consequence of Corollary \ref{cor:rel_order} is that the proposed order relation on $\RFA$ extends the order relation on $\RR$. In fact, to this end is enough to consider the case where $q=0$ in Corollary \ref{cor:rel_order}. 

In optimization problems, the objective function often involves squaring operations. 
Similarly, this can also be the case when we are dealing with fuzzy optimization problems. 
To this propose, it is reasonable to expected that the square of a fuzzy number is greater or equal than zero. 
However, in general, such a property does not hold true if we define the square of a fuzzy number in terms of the usual multiplication operation. 
Based on the notion of $\Psi$-cross product, one can define the square of an element $B \in \RFA$ as the fuzzy number $B^2 = B\odot B.$
The following proposition states $0 \leq B^2$ for all $B \in \RFA$.

\begin{proposition}\label{prop:inequality}
Let $A \in \RF$ be an asymmetric fuzzy number with $[A]_1 = \{a_m\}$.
For every $B = (r + qA) \in \RFA$, we have $0 \leq B^2$. 
Moreover, $B^2 = 0$ if, and only if, $r + qa_m = 0$
\end{proposition}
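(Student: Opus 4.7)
The plan is to compute $B^2 = B \odot B$ explicitly using the formula in Equation \eqref{eq:cross_product}, and then apply Corollary \ref{cor:rel_order}(a) to reduce the inequality $0 \leq B^2$ to an elementary inequality in $\RR$.

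First I would substitute $r_B = r_C = r$ and $q_B = q_C = q$ into \eqref{eq:cross_product} to obtain
\[
B^2 = B \odot B = (r^2 - a_m^2 q^2) + (2rq + 2a_m q^2)A,
\]
so that $B^2$ is an element of $\RFA$ whose generating pair is $(r^2 - a_m^2 q^2,\, 2q(r + a_m q))$. By Corollary \ref{cor:rel_order}(a) applied with $\lambda = 0$, the inequality $0 \leq B^2$ is equivalent to the real inequality
\[
0 \leq (r^2 - a_m^2 q^2) + 2q(r + a_m q)\,a_m.
\]

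The key observation is that the right-hand side is a perfect square. Expanding,
\[
(r^2 - a_m^2 q^2) + 2q(r + a_m q)\,a_m = r^2 - a_m^2 q^2 + 2 a_m q r + 2 a_m^2 q^2 = (r + a_m q)^2 \geq 0,
\]
which yields $0 \leq B^2$. I expect this computation to be the main (though mild) obstacle: one has to recognize that the expression simplifies exactly to the square of $r + q a_m$, which is precisely the quantity featured in the order relation on $\RFA$.

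For the equality case, I would argue as follows. From the computation above, $B^2 = 0$ (in $\RFA$) forces both coordinates in the $\Psi$-representation to vanish: $r^2 - a_m^2 q^2 = 0$ and $2q(r + a_m q) = 0$. The second equation gives either $q = 0$, in which case the first yields $r = 0$ and hence $r + q a_m = 0$, or $r + a_m q = 0$ directly. Conversely, if $r + a_m q = 0$, then $r = -a_m q$, and substitution shows that both coordinates of $B^2$ vanish, so $B^2 = 0$. This establishes the claimed equivalence.
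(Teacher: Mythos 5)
Your proposal is correct and follows essentially the same route as the paper: compute $B^2$ via Equation \eqref{eq:cross_product}, apply Corollary \ref{cor:rel_order} to reduce $0 \leq B^2$ to the real inequality, and recognize the perfect square $(r + a_m q)^2$. The only cosmetic difference is in the forward direction of the equality case, where you split on the vanishing of the two $\Psi$-coordinates while the paper reads $r + a_m q = 0$ directly off the identity $(r + a_m q)^2 = 0$; both are valid.
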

\begin{proof}
From Definition \ref{def:multiplication}, for $B=r + qA \in \RFA$ we have
\begin{equation}\label{eq:B2}
B^2 = B \odot B = (r^2 - a_m^2q^2) + (2rq + 2a_mq^2)A.    
\end{equation}

From Corollary \ref{cor:rel_order}, we have that $0 \leq B^2$ if, and only if, 
$$
0 \leq (r^2 - a_m^2q^2) + (2rq + 2a_mq^2)a_m = (r + a_mq)^2.
$$
Since $(r + a_mq)^2 \geq 0$ for every $r,q\in \RR$, we conclude that $0 \leq B^2.$
Moreover, if $B^2 = 0$, then, from equation above, we obtain $r + a_mq = 0$. Now, if $r + a_mq = 0$, then $r = -a_mq$. 
This implies $B^2 = 0$ because $(r^2 - a_m^2q^2) = 0$ and $(2rq + 2a_mq^2)= 2q(r + a_mq) = 0$. 

%
%
\end{proof}

In view of Corollary \ref{cor:rel_order} and Proposition \ref{prop:inequality}, we define the following subsets of $\RFA$:
\begin{eqnarray}
\RFA^0 & = & \{B \in \RFA \mid B^2 = 0\} \\     
 & = & \{B \in \RFA \mid r + qa_m = 0  \mbox{  with  }  B = r+qA  \}, \nonumber
\end{eqnarray}
\begin{eqnarray}
\RFA^+ & = & \{B \in \RFA \mid B^2 \neq 0 \mbox{  and  } 0 < B\} \\     
 & = & \{B \in \RFA \mid r + qa_m > 0  \mbox{  with  }  B = r+qA  \}, \nonumber
\end{eqnarray}
\begin{eqnarray}
\RFA^- & = & \{B \in \RFA \mid B^2 \neq 0 \mbox{  and  } B < 0\} \\     
 & = & \{B \in \RFA \mid r + qa_m < 0  \mbox{  with  }  B = r+qA  \}. \nonumber
\end{eqnarray}
Note that the subsets $\RFA^0$, $\RFA^+$, and $\RFA^-$ are non-empty and disjoint one each other such that $\RFA = \RFA^- \cup \RFA^0 \cup \RFA^+$. The following proposition characterizes the result of  $\Psi$-cross product of two elements taking into account these subsets. 

\begin{proposition}\label{prop:sign}
Let $A \in \RF$ be an asymmetric fuzzy number with $[A]_1 = \{a_m\}$ and $B,C \in \RFA$.
The following properties hold true:
\begin{itemize}
\item[1)] If $B \in \RFA^0$, then $B\odot C \in \RFA^0$;
\item[2)] If $B,C \in \RFA^+$ or $B,C \in \RFA^-$, then $B\odot C \in \RFA^+$;
\item[3)] If $B \in \RFA^+$ and $C \in \RFA^-$, then $B\odot C \in \RFA^-$.
\end{itemize}    
\end{proposition}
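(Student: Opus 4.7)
The strategy is to extract a single scalar quantity from $B \odot C$ that determines which of the three subsets $\RFA^0, \RFA^+, \RFA^-$ it belongs to, and show this quantity factors nicely. Concretely, given the characterization of the subsets via the sign of $r + q a_m$ (for $B = r + qA$), I need to compute this expression for $B \odot C$ and show it equals the product of the corresponding expressions for $B$ and $C$.

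Write $B = r_B + q_B A$ and $C = r_C + q_C A$. By Equation~\eqref{eq:cross_product}, we have
\begin{equation*}
B \odot C = (r_B r_C - a_m^2 q_B q_C) + (r_B q_C + r_C q_B + 2 a_m q_B q_C) A,
\end{equation*}
so with $r := r_B r_C - a_m^2 q_B q_C$ and $q := r_B q_C + r_C q_B + 2 a_m q_B q_C$, a direct expansion yields
\begin{equation*}
r + q a_m = r_B r_C + a_m (r_B q_C + r_C q_B) + a_m^2 q_B q_C = (r_B + q_B a_m)(r_C + q_C a_m).
\end{equation*}
This factorization identity is the heart of the argument; once it is in hand everything else is bookkeeping.

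With this identity established, each of the three items follows immediately from the defining inequalities of $\RFA^0, \RFA^+, \RFA^-$ given just before the proposition. For Item 1, $B \in \RFA^0$ means $r_B + q_B a_m = 0$, so the product vanishes and $B \odot C \in \RFA^0$. For Item 2, both $r_B + q_B a_m$ and $r_C + q_C a_m$ have the same strict sign (both positive or both negative), so the product is strictly positive, placing $B \odot C$ in $\RFA^+$. For Item 3, the factors have opposite strict signs, so the product is strictly negative and $B \odot C \in \RFA^-$.

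I do not anticipate a real obstacle: the only nontrivial piece is spotting and verifying the factorization $r + q a_m = (r_B + q_B a_m)(r_C + q_C a_m)$, which is a short algebraic computation. This is essentially the statement that the map $B = r + qA \mapsto r + q a_m$ is multiplicative with respect to $\odot$ — a fact that also explains, conceptually, why the $\Psi$-cross product was designed this way (it mirrors the ordinary product of real numbers on the ``midpoint'' level). After noting this, appealing to Corollary~\ref{cor:rel_order} (or directly to the definitions of $\RFA^0, \RFA^\pm$) closes each case in a single line.
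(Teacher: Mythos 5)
Your proof is correct and follows essentially the same route as the paper: both compute the quantity $r + q a_m$ for $B \odot C$, establish the factorization $(r_B + q_B a_m)(r_C + q_C a_m)$, and then read off the three cases from the signs of the factors. Nothing further is needed.
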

\begin{proof}
First of all, we have 
$$
B \odot C = (r_Br_C - a_m^2q_Bq_C) + (r_Bq_C + r_Cq_B + 2a_mq_Bq_C)A. 
$$
where $B=r_B + q_BA$ and $C=r_C + q_CA$. 

Consider the value $\gamma$ given by 
\begin{eqnarray*}
\gamma & = ¨& (r_Br_C - a_m^2q_Bq_C) + (r_Bq_C + r_Cq_B + 2a_mq_Bq_C)a_m \\
 & = ¨& r_Br_C + r_Bq_Ca_m + r_Cq_Ba_m + a_m^2q_Bq_C \\
 & = ¨& (r_B + q_Ba_m)(r_C + q_Ca_m).
\end{eqnarray*}

\begin{itemize}
\item[1)] If $B \in \RFA^0$, then $r_B + q_Ba_m = 0$ which implies that $\gamma = 0$ and, consequently, $B \odot C \in \RFA^0$. 
\item[2)] If $B,C \in \RFA^+$ or $B,C \in \RFA^-$, then $r_B + q_Ba_m, r_C + q_Ca_m > 0$ or 
$r_B + q_Ba_m, r_C + q_Ca_m < 0$. In both cases, we have 
$\gamma = (r_B + q_Ba_m)(r_C + q_Ca_m) > 0$ which implies that $B \odot C \in \RFA^+$. 
\item[3)] If $B \in \RFA^+$ and $C \in \RFA^-$, then $r_B + q_Ba_m > 0$ and 
$r_C + q_Ca_m < 0$. This implies that $\gamma < 0$. Therefore, $B\odot C \in \RFA^-$.
\end{itemize}    
\end{proof}

The following corollary is a consequence of Proposition \ref{prop:sign} an states that 
$B^2$, or, more generally, $B^k$ with $k\geq 2$, does not belong to $\RFA^0$ if $B \not\in \RFA^0$. 

\begin{corollary}
Let $B \in \RFA$. We have that $B^2 \in \RFA^0$ if, and only if, $B^2 = 0$.   
\end{corollary}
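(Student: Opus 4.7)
The plan is to exploit the trichotomy $\RFA = \RFA^- \cup \RFA^0 \cup \RFA^+$ (which the authors established as a disjoint union just before Proposition \ref{prop:sign}) together with the sign rules in Proposition \ref{prop:sign} applied to the particular case $C = B$. Since the statement is an ``if and only if,'' I would split the argument into two short directions.

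For the reverse direction, if $B^2 = 0$, then I simply need to check that $0 \in \RFA^0$; writing $0 = 0 + 0\cdot A$, the defining condition $r + qa_m = 0$ is trivially satisfied, so $0 \in \RFA^0$ and therefore $B^2 \in \RFA^0$. (Equivalently, by Proposition \ref{prop:inequality}, $B^2 = 0$ forces $r + qa_m = 0$, i.e.\ $B \in \RFA^0$; then squaring $B$ a second time keeps us in $\RFA^0$ by item 1) of Proposition \ref{prop:sign}.)

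For the forward direction, I would argue by contradiction on the location of $B$ in the trichotomy. If $B \in \RFA^+$, then item 2) of Proposition \ref{prop:sign} applied to $C = B$ yields $B^2 = B \odot B \in \RFA^+$. The same conclusion $B^2 \in \RFA^+$ follows from item 2) if $B \in \RFA^-$. Since $\RFA^+$ and $\RFA^0$ are disjoint, either possibility contradicts the hypothesis $B^2 \in \RFA^0$. Hence $B \in \RFA^0$, and the defining condition of $\RFA^0$ gives $B^2 = 0$.

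I do not foresee any substantive obstacle: the corollary is essentially a bookkeeping consequence of Proposition \ref{prop:sign}. The only subtlety worth a sentence of care is the reverse direction, where one should not conflate ``$B^2 = 0$'' with ``$B \in \RFA^0$'' — the former says the fuzzy number $B^2$ equals $0$, while the latter says $(B^2)^2 = 0$; the chain $B^2 = 0 \Rightarrow (B^2)^2 = 0\odot 0 = 0 \Rightarrow B^2 \in \RFA^0$ makes the implication transparent.
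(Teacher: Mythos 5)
Your proof is correct and follows essentially the same route as the paper's: the reverse direction is immediate from the definition of $\RFA^0$, and the forward direction uses the trichotomy together with item 2) of Proposition \ref{prop:sign} to rule out $B \in \RFA^+ \cup \RFA^-$. Your extra remark distinguishing ``$B^2 = 0$'' from ``$B \in \RFA^0$'' is a careful touch, but the argument is the same one the authors give.
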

\begin{proof}
On the one hand, if  $B^2 = 0$, then, by definition, $B^2 \in \RFA^0$.  On the other hand, suppose that $B^2 \in \RFA^0$. If $B \not\in \RFA^0$, then $B \in \RFA^-$ or $B \in \RFA^+$. From Proposition \ref{prop:sign}, we have that $B^2 \in \RFA^+$ which contradicts the hypothesis that 
$B^2 \in \RFA^0$. Therefore, we must have that $B \in \RFA^0$ which implies that $B^2 = 0$.
\end{proof}

Another interesting properties of the $\Psi$-cross product which are also consequence of Proposition \ref{prop:sign} is stated in the following corollary. 

\begin{corollary}
Let $B \in \RFA$. 
\begin{itemize}
\item[a)] If there exists $C \not\in \RFA^0$ such that $B\odot C \in \RFA^0$, then $B \in \RFA^0$;
\item[b)] $B\odot C = 0$ for all $C \in \RFA \setminus \RFA^0$ if, and only if, $B = 0$.    
\end{itemize}
\end{corollary}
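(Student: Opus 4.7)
The plan is to derive both items essentially as corollaries of Proposition \ref{prop:sign}, using the trichotomy $\RFA = \RFA^- \cup \RFA^0 \cup \RFA^+$ and a direct computation on the formula \eqref{eq:cross_product}.

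For item (a), I would argue by contrapositive. Assume $B \not\in \RFA^0$; since $\RFA$ is the disjoint union of $\RFA^-$, $\RFA^0$, and $\RFA^+$, we have $B \in \RFA^+$ or $B \in \RFA^-$. Likewise, the hypothesis $C \not\in \RFA^0$ forces $C \in \RFA^+$ or $C \in \RFA^-$. Running through the four resulting cases, items 2) and 3) of Proposition \ref{prop:sign} yield $B \odot C \in \RFA^+ \cup \RFA^-$ in every case, which contradicts $B \odot C \in \RFA^0$. Hence $B \in \RFA^0$.

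For item (b), the implication $B = 0 \Rightarrow B \odot C = 0$ for every $C \in \RFA$ follows immediately by substituting $r_B = q_B = 0$ into formula \eqref{eq:cross_product}; the converse is where part (a) enters. Assume $B \odot C = 0$ for every $C \in \RFA \setminus \RFA^0$. Since $0 \in \RFA^0$, each such $C$ satisfies $B \odot C \in \RFA^0$, so by part (a) we conclude $B \in \RFA^0$, i.e.\ $B = r_B + q_B A$ with $r_B + q_B a_m = 0$. To upgrade this to $B = 0$, I would test against a specific admissible $C$: the real number $C = 1$ lies in $\RFA \setminus \RFA^0$ because $1 + 0 \cdot a_m = 1 \neq 0$. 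Substituting $r_C = 1$, $q_C = 0$ into \eqref{eq:cross_product} gives $B \odot 1 = r_B + q_B A = B$, so the assumption $B \odot 1 = 0$ yields $B = 0$.

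There is no real obstacle here; everything reduces to the sign analysis already encoded in Proposition \ref{prop:sign} plus one direct evaluation of the $\Psi$-cross product. The only small subtlety worth flagging is that part (a) alone does not give $B = 0$ in part (b)—it only gives $B \in \RFA^0$, which is a one-parameter family $\{q(A - a_m) : q \in \RR\}$—so one must additionally exhibit some $C \in \RFA \setminus \RFA^0$ that separates the zero element of $\RFA^0$ from the other elements of $\RFA^0$ under $\odot$. Choosing $C = 1$ accomplishes this in one line.
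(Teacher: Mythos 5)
Your proof is correct. Item (a) is handled exactly as in the paper (the paper simply says it ``follows immediately from Proposition \ref{prop:sign}''; your explicit contrapositive with the four sign cases is the same argument spelled out). For item (b), however, you take a genuinely different and cleaner route in the converse direction. The paper argues by contradiction with a case analysis: assuming $B=r_B+q_BA\neq 0$, it first tests $C=r_B$ when $r_B\neq 0$, then, after reducing to $r_B=0$ and $q_B\neq 0$, splits further on whether $a_m\neq 0$ (testing $C=q_BA$) or $a_m=0$ (testing $C=q_B$), each time exhibiting a nonzero value of $B\odot C$. You instead observe that $1\in\RFA\setminus\RFA^0$ and that $1$ is the multiplicative identity for $\odot$ (indeed $B\odot 1=r_B+q_BA=B$ by \eqref{eq:cross_product}), so the hypothesis applied to $C=1$ gives $B=0$ in one line, with no case analysis on $r_B$, $q_B$, or $a_m$. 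Your preliminary appeal to part (a) to get $B\in\RFA^0$ is logically sound but actually redundant, since the test $C=1$ already yields the full conclusion; what your approach buys is brevity and the conceptual point that the identity element separates $0$ from the rest of $\RFA^0$, while the paper's longer argument has the mild virtue of never invoking the unit explicitly.
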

\begin{proof}
Item a) follows immediately from Proposition \ref{prop:sign}. Below, we prove Item b).

On the one hand, if $B=0$, then, by definition of $\Psi$-cross product, we have that 
$B\odot C = 0$ for all $C \in \RFA$. On the other hand, suppose that $B\odot C = 0$ for all $C \in \RFA \setminus \RFA^0$. Consider that $B = (r_B + q_BA)\neq 0$. In this case, we have that $r_B \neq 0$ or $q_B\neq 0$. If $r_B \neq 0$, then $C = r_B \in \RR \subset \RFA \setminus \RFA^0$ with $C^2 = r_B^2 > 0$. For this choice of $C$, we obtain that 
$$
B \odot C = r_B^2 + r_Bq_BA \neq 0.
$$
This yields a contradiction with the hypothesis that $B\odot C = 0$. Therefore, we must have that $r_B = 0$. In this case, Equation \eqref{eq:cross_product} becomes  
$$
B \odot C = -a_m^2q_Bq_C + (r_Cq_B + 2a_mq_Bq_C)A.
$$
with $C = r_C + q_CA$ and $[A]_1 = \{a_m\}$.
Now, consider the case where $r_B = 0$ and $q_B \neq 0$. 
If $a_m\neq 0$, then $C = q_BA$ does not belong to  $\RFA^0$ since $q_Ba_m \neq 0$. 
Moreover, we obtain the following contradiction:
$$
0 = B\odot C = -a_m^2q_B^2 + 2a_mq_B^2A \neq 0.
$$
Now, if $a_m = 0$, then $C = q_B$ does not belong to $\RFA^0$ and 
$$
B\odot C = q_B^2A \neq 0.
$$
Again, we obtain a contradiction with our hypothesis. Therefore, we conclude that $r_B = q_B = 0$, that is, $B = 0$.
\end{proof}


In view of Proposition \ref{prop:inequality}, we can establish the following theorem which states that the integral of the square of integral function is always greater or equal than zero. 

\begin{theorem}\label{thm:integral_square}
Let $A \in \RF$ be an asymmetric fuzzy number with $[A]_1 = \{a_m\}$. 
If $f:[a,b]\to \RFA$ is an integrable function such that $f(t) = r(t) + q(t)A$ for all $t \in [a,b]$, then
\begin{equation}
0 \leq \int_a^b f(t)^2 \textnormal{d}t.    
\end{equation}

In addition, $\int_a^b f(t)^2 \textnormal{d}t = 0$ if, and only if, $r(t) + q(t)a_m = 0$ almost everywhere in $[a,b]$. 
\end{theorem}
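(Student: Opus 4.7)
My plan is to reduce the inequality to a statement about a non-negative real integrand by expanding $f(t)^2$ explicitly, applying Proposition \ref{prop:integral} to split the integral into its real-part and $A$-part, and then invoking Corollary \ref{cor:rel_order}(a) with $\lambda = 0$ to convert the $\RFA$-inequality into an inequality between real numbers.

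Concretely, from Equation \eqref{eq:B2} applied pointwise, I have
\[
f(t)^2 = \bigl(r(t)^2 - a_m^2 q(t)^2\bigr) + \bigl(2r(t)q(t) + 2a_m q(t)^2\bigr)A .
\]
Since $r$ and $q$ are integrable on $[a,b]$ (they are the components of the integrable function $f$), the integrands in both components above are integrable. By Proposition \ref{prop:integral}, the integral $\int_a^b f(t)^2\,dt$ exists and equals $R + Q A$ where
\[
R = \int_a^b \bigl(r(t)^2 - a_m^2 q(t)^2\bigr)\,dt, \quad Q = \int_a^b \bigl(2r(t)q(t) + 2a_m q(t)^2\bigr)\,dt.
\]
By Corollary \ref{cor:rel_order}(a), $0 \leq R + QA$ is equivalent to $0 \leq R + Q a_m$. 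The main algebraic step is then to recognise that
\[
R + Q a_m = \int_a^b \Bigl[r(t)^2 + 2 a_m r(t) q(t) + a_m^2 q(t)^2 \Bigr]\,dt = \int_a^b \bigl(r(t) + a_m q(t)\bigr)^2\,dt,
\]
which is the integral of a non-negative real-valued function and is therefore $\geq 0$. This gives the inequality.

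For the equality statement, I split into two directions. If $\int_a^b f(t)^2\,dt = 0$, then by the uniqueness of the $\Psi$-representation (Theorem \ref{thm:rfa_space}) both $R = 0$ and $Q = 0$, hence $R + Q a_m = 0$; by the identity above, $\int_a^b (r(t) + a_m q(t))^2\,dt = 0$, and since the integrand is non-negative and (Lebesgue) integrable, $r(t) + a_m q(t) = 0$ almost everywhere. Conversely, assume $r(t) + a_m q(t) = 0$ a.e.; then $a_m q(t) = -r(t)$ a.e., so pointwise a.e.\ we have $r(t)^2 - a_m^2 q(t)^2 = 0$ and $2r(t)q(t) + 2a_m q(t)^2 = 2q(t)(r(t) + a_m q(t)) = 0$, whence $R = Q = 0$ and the integral vanishes.

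The only step requiring care is the justification that the non-negativity of the real integrand plus a zero integral yields vanishing a.e., which is a standard fact from classical integration theory; everything else is a direct transcription of Proposition \ref{prop:inequality} from a pointwise identity to an integrated identity, made possible because the $\RFA$-integral respects the component decomposition.
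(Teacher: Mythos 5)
Your proposal is correct and follows essentially the same route as the paper's proof: expand $f(t)^2$ via the $\Psi$-cross product, split the integral componentwise with Proposition \ref{prop:integral}, reduce the order statement to $0 \leq \int_a^b (r(t)+a_m q(t))^2\,\textnormal{d}t$ via Corollary \ref{cor:rel_order}, and use the standard fact that a non-negative integrand with zero integral vanishes almost everywhere. If anything, your treatment of the equality case is slightly more explicit than the paper's (which states the equivalence rather tersely), but the content is identical.
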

\begin{proof}
Let $r,q:[a,b]\to \RR$ such that $f(t) = r(t) + q(t)A$ for all $t \in [a,b]$. 
Note that $r$ and $q$ are also integral in $[a,b]$, 
$$
f(t)^2 = (r(t)^2 - a_m^2q(t)^2) + (2r(t)q(t) + 2a_mq(t)^2)A
$$
and 
$$
\int_a^b f(t)^2 dt = \left(\int_a^b (r(t)^2 - a_m^2q(t)^2)\textnormal{d}t \right) + \left( \int_a^b (2r(t)q(t) + 2a_mq(t)^2)\textnormal{d}t \right)A.
$$

From Corollary \ref{cor:rel_order}, $0 \leq \int_a^b f(t)^2 dt$ if, and only if, 
\begin{eqnarray*}
0 & \leq & \left(\int_a^b (r(t)^2 - a_m^2q(t)^2)\textnormal{d}t \right) +  \left( \int_a^b (2r(t)q(t) + 2a_mq(t)^2)\textnormal{d}t \right)a_m \\
& = & \int_a^b (r(t) + a_mq(t))^2\textnormal{d}t 
\end{eqnarray*}
Note that the above inequality holds true because  $(r(t) + a_mq(t))^2 \geq 0$ for all 
$t \in [a,b].$ This implies that $0 \leq \int_a^b f(t)^2 dt$. 
Moreover, we have 
$$
\int_a^b (r(t) + a_mq(t))^2\textnormal{d}t = 0 \quad \Leftrightarrow \quad r(t) + a_mq(t) = 0 
\mbox{ almost everywhere.}
$$ 
 Hence,
 $0 = \int_a^b f(t)^2 dt$ if, and only if, $f(t) \in \RFA^0$ almost everywhere. 

\end{proof}


%

\section{Preliminary Results for Fuzzy Calculus of Variations}\label{Preliminary1}
\noindent

As in the classical calculus of variations, we will provide the basis for studying necessary and
sufficient conditions for general fuzzy variational problems based on the Banach space 
$\left(\RFA, \|\cdot\|\right).$
The following lemma shows that we can interchange integration and differentiation while working with
 fuzzy functions.

\begin{lemma}\label{inter}
Given $Y\subseteq\RFA$ and $X = \left[a,b\right]\times\left[c,d\right]\subseteq\RR^2$, $a,b,c,d\in\RR$, with $a<b$ and $c<d$. Let $g:X\longrightarrow Y$ be the function defined by $g(t,\epsilon)=r(t,\epsilon)+q(t,\epsilon)A$, $\forall (t,\epsilon) \in X$, such that $r,q\in\textnormal{C}^1\left(X,\RR\right)$ (i.e., $r$ and $q$ have continuous partial derivatives in $X$). For all $\epsilon \in\left[c,d\right]$, we have 
\begin{equation}\label{dint}
	\dfrac{\textnormal{d}}{\textnormal{d} \epsilon}\int_{a}^b g(t,\epsilon)\textnormal{d}t=\int_{a}^b\dfrac{\partial}{\partial\epsilon}g(t,\epsilon)\textnormal{d}t.
\end{equation}
\end{lemma}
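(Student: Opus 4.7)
The plan is to reduce the fuzzy Leibniz rule to its classical counterpart by exploiting the component decomposition of $\RFA$-valued functions together with Propositions \ref{prop:derivative} and \ref{prop:integral}. The hypothesis $r,q \in \textnormal{C}^1(X,\RR)$ on the compact rectangle $X$ ensures that, for each fixed $\epsilon \in [c,d]$, the maps $t \mapsto r(t,\epsilon)$ and $t \mapsto q(t,\epsilon)$ are continuous, hence Riemann integrable on $[a,b]$. By Proposition \ref{prop:integral}, the $\RFA$-valued integral decomposes as
\begin{equation*}
\int_a^b g(t,\epsilon)\,\textnormal{d}t = R(\epsilon) + Q(\epsilon) A,
\end{equation*}
where $R(\epsilon) := \int_a^b r(t,\epsilon)\,\textnormal{d}t$ and $Q(\epsilon) := \int_a^b q(t,\epsilon)\,\textnormal{d}t$ are real-valued functions on $[c,d]$.

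Next I would invoke the classical Leibniz rule for real-valued functions. Since $r$ and $q$ are continuously differentiable on the compact set $X$, the partials $\partial_\epsilon r$ and $\partial_\epsilon q$ are continuous on $X$, and hence uniformly continuous. This is exactly the hypothesis under which the classical differentiation-under-the-integral-sign theorem applies, yielding
\begin{equation*}
R'(\epsilon) = \int_a^b \frac{\partial r}{\partial \epsilon}(t,\epsilon)\,\textnormal{d}t, \qquad Q'(\epsilon) = \int_a^b \frac{\partial q}{\partial \epsilon}(t,\epsilon)\,\textnormal{d}t.
\end{equation*}
In particular, $R$ and $Q$ are differentiable on $[c,d]$, so Proposition \ref{prop:derivative} guarantees that $\epsilon \mapsto R(\epsilon) + Q(\epsilon)A$ is differentiable with derivative $R'(\epsilon) + Q'(\epsilon)A$.

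Finally, I would identify the right-hand side of \eqref{dint} with this same expression. The partial derivative of $g$ with respect to $\epsilon$ is, by Proposition \ref{prop:derivative} applied in the $\epsilon$-variable, $\frac{\partial}{\partial \epsilon} g(t,\epsilon) = \frac{\partial r}{\partial \epsilon}(t,\epsilon) + \frac{\partial q}{\partial \epsilon}(t,\epsilon) A$. Applying Proposition \ref{prop:integral} once more,
\begin{equation*}
\int_a^b \frac{\partial}{\partial \epsilon} g(t,\epsilon)\,\textnormal{d}t = \left(\int_a^b \frac{\partial r}{\partial \epsilon}(t,\epsilon)\,\textnormal{d}t\right) + \left(\int_a^b \frac{\partial q}{\partial \epsilon}(t,\epsilon)\,\textnormal{d}t\right) A = R'(\epsilon) + Q'(\epsilon) A,
\end{equation*}
which coincides with $\tfrac{\textnormal{d}}{\textnormal{d}\epsilon}\int_a^b g(t,\epsilon)\,\textnormal{d}t$ computed above. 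This establishes \eqref{dint}.

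There is no genuine obstacle here once the decomposition framework of $\RFA$ is in place: the only non-trivial ingredient is the classical Leibniz rule for $R$ and $Q$, and its hypotheses are directly supplied by the $\textnormal{C}^1$ regularity of $r$ and $q$ on the compact rectangle $X$. The proof is essentially a bookkeeping exercise that translates the vector-valued identity into two scalar identities, applies the real-variable result, and then reassembles the pieces via the canonical isomorphism $\Psi$.
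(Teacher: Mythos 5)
Your proposal is correct and follows exactly the route the paper intends: the paper's own proof is a one-line remark that the result ``follows from classical calculus theory for $r$ and $q$ together with Propositions \ref{prop:derivative} and \ref{prop:integral},'' and your argument is precisely that reduction, written out in full. No discrepancy to report.
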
	
\begin{proof}
It follows from the results of the classical calculus theory for the functions $r$ and $q$ together  with Propositions \ref{prop:derivative} and \ref{prop:integral}. 
\end{proof}

The notion of local minimum or maximum of a function $g\in\RFA$ is analogous to the classical case, taking into account the order relation defined in Equation~\eqref{eq:order}. More precisely, a point $t^* \in I \subset \RR$ is  
a local minimizer (or maximizer) of a function $f:I\to \RFA$ if $f(t^*) \leq f(z)$ (or $f(z) \leq f(t^*)$) for all 
$z$ in a neighborhood of $t^*$ in $I$. In this case, $f(t^*)$ is said to be a local minimum (or maximum) of $f$. 
A natural question is whether or not it is possible to establish a necessary and sufficient conditions for a point $t^*$ to be a local minimizer or maximizer of a $\RFA$-value function in terms of its derivative when it exists. The following lemmas address this issue.

\begin{lemma}\label{lem:first_condition}
Let $I\subset \RR$ be an open interval, $A\in \RFA$ be asymmetric with $a_m = 0.5(\underline{a}_1 + \overline{a}_1)$, and $f:I \to \RFA$ be a differentiable function in $I$ such that 
$f(t) = r(t) + q(t)A$, $\forall t \in I$.
If $t^* \in I$ is a local minimizer or maximizer of $f$, then $r'(t^*) + q'(t^*)a_m = 0$. 
\end{lemma}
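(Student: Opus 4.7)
The plan is to reduce the $\RFA$-valued extremum problem to a classical real-valued one and invoke Fermat's theorem. The key observation is that the order relation in Equation~\eqref{eq:order} is lexicographic, and the leading coordinate is precisely the real quantity $r + qa_m$. Thus $B \leq C$ in $\RFA$ implies (in all three clauses of \eqref{eq:order}) the plain real inequality $r_B + q_B a_m \leq r_C + q_C a_m$. This suggests introducing the auxiliary real-valued function $h:I\to \RR$ defined by $h(t) = r(t) + q(t)a_m$ and transferring the extremal property of $f$ at $t^*$ to $h$ at $t^*$.

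First, I would verify that $h$ is differentiable at $t^*$ with $h'(t^*) = r'(t^*) + q'(t^*)a_m$. Since $f$ is differentiable in $I$, Proposition~\ref{prop:derivative} guarantees that $r$ and $q$ are differentiable in $I$, so $h$ is a linear combination of differentiable real functions and the formula for $h'$ follows from standard real calculus.

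Second, suppose $t^*$ is a local minimizer of $f$. Pick a neighborhood $V \subseteq I$ of $t^*$ with $f(t^*) \leq f(z)$ for every $z \in V$. For each such $z$, whichever of clauses (I), (II), (III) in \eqref{eq:order} is responsible for $f(t^*) \leq f(z)$, the condition on the first coordinate yields $r(t^*) + q(t^*)a_m \leq r(z) + q(z)a_m$, i.e., $h(t^*) \leq h(z)$. Hence $t^*$ is a local minimizer of the real-valued $h$ on the open interval $I$. Classical Fermat's theorem then gives $h'(t^*)=0$, which is the desired equality. The case where $t^*$ is a local maximizer is symmetric: $f(z)\leq f(t^*)$ forces $h(z)\leq h(t^*)$ by the same case analysis.

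I do not foresee a substantive obstacle: the proof is essentially a projection argument exploiting the lexicographic design of \eqref{eq:order}. The only point requiring care is the bookkeeping in clauses (II) and (III), where $r + qa_m$ may coincide at $t^*$ and $z$; this is harmless since the inequality $h(t^*)\leq h(z)$ only needs to be non-strict to apply Fermat's theorem.
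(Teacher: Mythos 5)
Your proof is correct and rests on the same key observation as the paper's: every clause of \eqref{eq:order} forces $r(t^*)+q(t^*)a_m \leq r(z)+q(z)a_m$, so the extremal property descends to the real function $h(t)=r(t)+q(t)a_m$. The paper then re-derives Fermat's theorem by hand, via one-sided difference quotients along decreasing and increasing sequences converging to $t^*$, whereas you simply invoke it; your packaging is cleaner, but the argument is essentially identical.
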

\begin{proof}
First of all, let us suppose w.l.g. that $t^*$ is minimizer since the proof is similar for the case where $t^*$ is a maximizer. 

From Proposition \ref{prop:derivative}, we have that $r$ and $q$ are differentiable in $I$. 
Let $(t_n)$ be an arbitrary decreasing sequence such that $t_n \to t^*$ and $f(t^*) \leq f(t_n)$ for all $n$. 
It is always possible to extract a subsequence $(t_{n_k})$ such that either (I) in \eqref{eq:order} is satisfied for all $k$ or one of the conditions (II) and (III) in \eqref{eq:order} is satisfied for all $k$. Thus, 
we have that one of the following cases occurs:
\begin{itemize}
    \item[(a)] $r(t^*) + q(t^*)a_m< r(t_{n_k}) + q(t_{n_k})a_m$ for all $k$;
    \item[(b)] $r(t^*) + q(t^*)a_m = r(t_{n_k}) + q(t_{n_k})a_m$ for all $k$;
\end{itemize}
If Item (a) happens, then we have 
\begin{equation*}
0 < \frac{(r(t_{n_k}) - r(t^*)) + a_m(q(t_{n_k}) - q(t^*)) }{t_{n_k} -t^*}      
\end{equation*}
\begin{equation*}
\Rightarrow 0 \leq \lim_{k\to \infty} \frac{(r(t_{n_k}) - r(t^*)) + a_m(q(t_{n_k}) - q(t^*)) }{t_{n_k} -t^*} = r'(t^*) + a_mq'(t^*). 
\end{equation*}
Now, if (b) is satisfied, then, using the same manipulations above, we conclude that $0= r'(t^*) + a_mq'(t^*)$. 
Therefore, in both cases the inequality $0\leq r'(t^*) + a_mq'(t^*)$ holds true. 

In the same way, we can conclude that $0\geq r'(t^*) + a_mq'(t^*)$ following the same steps above, but with the sequence $(t_n)$ being increasing. 

Therefore, from these inequalities, we obtain that $0= r'(t^*) + a_mq'(t^*)$. 
\end{proof}

\begin{lemma}\label{lem:second_condition}
Let $I\subset \RR$ be open interval, $A\in \RFA$ be asymmetric with $a_m = 0.5(\underline{a}_1 + \overline{a}_1)$, and $f:I \to \RFA$ be a function three times continuously differentiable in $I$ such that 
$f(t) = r(t) + q(t)A$ for all $t \in I$. 
\begin{itemize}
    \item[a)] If $0=r'(t^*) + q'(t^*)a_m$ and $0 < r''(t^*) + q''(t^*)a_m$, then $f(t^*)$ is a local minimum of $f$;
    \item[b)] If $0=r'(t^*) + q'(t^*)a_m$ and $0 > r''(t^*) + q''(t^*)a_m$, then $f(t^*)$ is a local maximum of $f$. 
\end{itemize}
\end{lemma}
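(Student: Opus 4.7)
The plan is to reduce the statement entirely to the classical second derivative test for a real-valued auxiliary function. Observe that the order relation \eqref{eq:order} is lexicographic in the sense that condition (I) alone -- the strict inequality between the real numbers $r(t)+q(t)a_m$ at the two points -- already suffices to decide the comparison between two elements of $\RFA$. So I introduce the auxiliary function $h:I \to \RR$ defined by $h(t) = r(t) + q(t)a_m$ and aim to show that a strict extremum of $h$ at $t^*$ forces the corresponding extremum of $f$ at $t^*$.

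First I would apply Proposition \ref{prop:derivative} (twice, for the derivatives of $r$ and $q$, noting the coordinate decomposition of $f'$) to conclude that $r,q \in \textnormal{C}^3(I,\RR)$, and consequently $h \in \textnormal{C}^3(I,\RR)$ with $h'(t^*) = r'(t^*) + q'(t^*)a_m$ and $h''(t^*) = r''(t^*) + q''(t^*)a_m$. Under the hypothesis of item (a), one therefore has $h'(t^*)=0$ and $h''(t^*)>0$, which by the classical (crisp) second derivative test gives a $\delta>0$ with $(t^*-\delta,t^*+\delta) \subset I$ such that $h(t^*) < h(t)$ for every $t \in (t^*-\delta,t^*+\delta) \setminus \{t^*\}$.

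Next I would lift this strict inequality back to $\RFA$. For any such $t \neq t^*$ in the neighborhood, the strict inequality $r(t^*)+q(t^*)a_m < r(t)+q(t)a_m$ matches exactly condition (I) of \eqref{eq:order} between $B = f(t^*)$ and $C = f(t)$, yielding $f(t^*) \leq f(t)$. At $t = t^*$ itself, reflexivity (item i) of Definition \ref{def:order}) gives $f(t^*) \leq f(t^*)$. Hence $f(t^*) \leq f(t)$ on the whole neighborhood, so $t^*$ is a local minimizer and $f(t^*)$ is a local minimum, proving item (a). Item (b) is entirely symmetric: the classical test applied to $h$ produces a strict local maximum, and condition (I) of \eqref{eq:order} is then invoked with the roles of the two sides swapped to obtain $f(t) \leq f(t^*)$ in a neighborhood of $t^*$.

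There is essentially no hard step: the whole content of the lemma is the observation that the proposed order on $\RFA$ is lexicographic with the real quantity $r+qa_m$ in the leading position, so strict extrema of $h$ automatically trigger case (I) of \eqref{eq:order} and the conditions (II)--(III) never come into play. The only point requiring care is to confine attention to strict inequality on $h$ (using that $h''(t^*) \neq 0$), since otherwise one could in principle land in the tie-breaking conditions (II) or (III), which the hypotheses do not control. The smoothness assumption $\textnormal{C}^3$ is stronger than needed here and will presumably be exploited later in the series; for the present lemma $\textnormal{C}^2$ already suffices via the classical argument.
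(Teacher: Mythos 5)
Your proposal is correct and follows essentially the same route as the paper: the paper likewise introduces the auxiliary real function $g(t)=r(t)+q(t)a_m$, uses Taylor's theorem (i.e.\ the classical second derivative test) to obtain the strict inequality $g(t^*)<g(t^*+h)$ for small $h\neq 0$, and then concludes via condition (I) of \eqref{eq:order}. Your additional remarks on why strictness matters (to avoid the tie-breaking cases) and on $\textnormal{C}^2$ sufficing are accurate observations, not gaps.
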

\begin{proof}
Here, we only prove Item a) since the proof of Item b) follows similarly. 

Consider the function $g(t) = r(t) + q(t)a_m$ for all $t \in I$. Note that $g$ is three times continuously  differentiable in $I$ because $r$ and $q$ are so by Proposition \ref{prop:derivative}. 
By hypothesis, we have that $g'(t^*) = r'(t^*) + q'(t^*)a_m = 0$ and 
$g''(t^*) = r''(t^*) + q''(t^*)a_m > 0.$
Using Taylor Theorem, we have 
$$
g(t^*+h) = g(t^*) + g'(t^*)h + 0.5g''(t^*)h^2 + \epsilon(h^3) =  g(t^*) + 0.5g''(t^*)h^2 + \epsilon(h^3)
$$
where $\lim_{h\to 0} \frac{\epsilon(h^3)}{h^2} = 0$. 
From the hypothesis that $I$ is open and the property of the remainder $\epsilon$,   
there exists $\delta > 0$ such that for all $|h| < \delta$ we have $t^*+h \in I$ and $\Big|\frac{\epsilon(h^3)}{h^2}\Big| \leq \frac{g''(t^*)}{2}.$
This implies that 
$$
\frac{2}{h^2}(g(t^*+h) - g(t^*)) \geq   0.5g''(t^*) > 0 \Rightarrow g(t^*+h) > g(t^*).
$$
Thus, we conclude that $f(t^*) \leq f(t^*+h)$ for all $h \in (-\delta,\delta)$ since the condition (I) in \eqref{eq:order} is satisfied:
$$
g(t^*) = r(t^*) + q(t^*)a_m < g(t^*+h) = r(t^*+h) + q(t^*+h)a_m.
$$
\end{proof}



%

\section{The Fuzzy Lagrange and du Bois-Reymond Lemmas}\label{sec:lemmas}
\noindent

As we know, Lagrange and du Bois-Reymond lemmas play a very important role in classical variational calculus and in the theory of ordinary differential equations \cite{GB,PC,E,Hes66,Logan}. In this section, we study the generalizations of the classical Lagrange and du Bois-Reymond lemmas for the class of functions on $\RFA$. We start with an extension of Lagrange classic lemma to a general fuzzy setting.

\begin{lemma}(Lagrange)\label{lem:fundamental1}
Let $A$ be an asymmetric fuzzy number such that $[A]_1 = \{a_m\}$. 
\begin{itemize}
\item[(i)] If $f:[a,b]\longrightarrow\RFA$ is an integrable function such that  
\begin{equation}\label{eq:fundamental1a}
\int_{a}^{b} f(t)\odot\eta(t)\textnormal{d}t \in \RFA^0	
\end{equation}
for any $\eta \in C^{l}\left([a,b],\RFA\right)$ with $\eta^{(j)}(a)=\eta^{(j)}(b)=0$, $j=0,1,\dots,l$, then 
$f(t) \in \RFA^0$ almost everywhere in $[a,b].$

 \item[(ii)] If $f:[a,b]\longrightarrow\RFA$ is an integrable function such that  
\begin{equation}\label{eq:fundamental1b}
\int_{a}^{b} f(t)\odot\eta(t)\textnormal{d}t=0	
\end{equation}
for any $\eta \in C^{l}\left([a,b],\RFA\right)$ with $\eta^{(j)}(a)=\eta^{(j)}(b)=0$, $j=0,1,\dots,l$, then 
$f(t) = 0$ almost everywhere in $[a,b].$
\end{itemize}
\end{lemma}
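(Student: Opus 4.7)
The plan is to reduce both statements to the classical Lagrange lemma for real-valued integrable functions by exploiting the component decomposition of $\RFA$-valued functions together with Equation \eqref{eq:cross_product} and Proposition \ref{prop:integral}. Writing $f(t)=r(t)+q(t)A$ and $\eta(t)=s(t)+p(t)A$, the cross-product formula gives
\[
f(t)\odot\eta(t) = \bigl(r(t)s(t)-a_m^2 q(t)p(t)\bigr) + \bigl(r(t)p(t)+s(t)q(t)+2a_m q(t)p(t)\bigr)A,
\]
and Proposition \ref{prop:integral} expresses $\int_a^b f\odot\eta\,\textnormal{d}t$ as $R_\eta + Q_\eta A$ with $R_\eta$ and $Q_\eta$ explicit integrals of the above real coefficients. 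In both parts the strategy will be to specialize $\eta$ to purely real test functions $\eta(t)=\phi(t)+0\cdot A$, where $\phi\in\textnormal{C}^l([a,b],\RR)$ satisfies $\phi^{(j)}(a)=\phi^{(j)}(b)=0$ for $j=0,\dots,l$; by Proposition \ref{prop:derivative} such an $\eta$ lies in $\textnormal{C}^l([a,b],\RFA)$ with the prescribed vanishing boundary conditions in $\RFA$.

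For part (ii), this specialization makes $p\equiv 0$, so the identity above reduces to $f(t)\odot\eta(t)=r(t)\phi(t) + q(t)\phi(t)A$, and the hypothesis $\int f\odot\eta\,\textnormal{d}t=0$ becomes the pair of scalar equations
\[
\int_a^b r(t)\phi(t)\,\textnormal{d}t = 0 \quad\textnormal{and}\quad \int_a^b q(t)\phi(t)\,\textnormal{d}t = 0
\]
for every admissible real $\phi$. Since $r$ and $q$ are integrable by Proposition \ref{prop:integral}, the classical Lagrange lemma applied separately to each yields $r(t)=q(t)=0$ almost everywhere, i.e., $f(t)=0$ a.e.

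For part (i), the additional ingredient is the characterization of $\RFA^0$: writing the integral as $R_\eta+Q_\eta A$, the condition $\int f\odot\eta\,\textnormal{d}t\in\RFA^0$ is equivalent (by the definition of $\RFA^0$) to $R_\eta + a_m Q_\eta = 0$. A short computation, entirely analogous to the one in the proof of Proposition \ref{prop:sign}, collapses this to
\[
\int_a^b \bigl(r(t)+a_m q(t)\bigr)\bigl(s(t)+a_m p(t)\bigr)\,\textnormal{d}t = 0.
\]
Taking again $\eta(t)=\phi(t)+0\cdot A$ with admissible real $\phi$, so that $s=\phi$ and $p=0$, the hypothesis becomes $\int_a^b (r+a_m q)\phi\,\textnormal{d}t=0$ for every such $\phi$. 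Since $r+a_m q\in L^1[a,b]$, the classical Lagrange lemma forces $r(t)+a_m q(t)=0$ a.e., which is precisely the condition $f(t)\in\RFA^0$ a.e.

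I do not anticipate a serious technical obstacle: the entire argument rests on the observation that restricting the $\RFA$-valued test functions to the copy $\RR\hookrightarrow\RFA$ already suffices to probe all the relevant scalar information — separating the two real components in part (ii) and isolating the ``center'' functional $r+a_m q$ defining $\RFA^0$ in part (i). The only subtlety worth stating explicitly is that the usual $\RFA$-valued boundary conditions $\eta^{(j)}(a)=\eta^{(j)}(b)=0$ translate, under this specialization, exactly into the scalar conditions required by the classical Lagrange lemma, which is immediate from Proposition \ref{prop:derivative}.
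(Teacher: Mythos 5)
Your proof is correct, but it takes a genuinely different route from the paper's. The paper proves both parts from scratch with an explicit Dirac sequence $\delta_k(x)=c_k^{-1}\left[\tfrac{\cos(\pi x \epsilon^{-1})+1}{2}\right]^{(l+1)k}$: for part (ii) it uses the real test functions $\delta_k(\cdot-t)$ and passes to the limit to recover $r(t)$ and $q(t)$ at each continuity point, while for part (i) it argues by contradiction with the \emph{fuzzy} test functions $\eta_k = r\delta_k(\cdot-t)+q\delta_k(\cdot-t)A$ built from $f$ itself, showing that the center of $\int f\odot\eta_k$ converges to $(r(t)+a_mq(t))^2>0$. You instead restrict to real test functions $\eta=\phi+0\cdot A$ throughout and reduce everything to the classical scalar Lagrange lemma: in part (ii) the two components decouple into $\int r\phi=\int q\phi=0$, and in part (i) the $\RFA^0$ condition collapses to $\int (r+a_mq)\phi=0$, which is exactly the functional that $\RFA^0$ tests. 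Your reduction is shorter, correctly identifies that real-valued test functions already extract all the information the conclusion needs, and in fact sidesteps a regularity issue in the paper's part (i): there $\eta_k$ inherits only the continuity of the merely integrable $r,q$ near $t$, so its membership in $C^l([a,b],\RFA)$ is not justified, whereas your $\phi$ can be taken genuinely smooth. What the paper's argument buys in exchange is self-containedness (it does not invoke the classical lemma as a black box) and explicit localizing test functions; if you wanted your version to be equally self-contained you would need to supply a proof of the classical Lagrange lemma for $L^1$ functions against $C^l$ test functions with vanishing boundary derivatives, which is standard but not free.
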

\begin{proof}
First of all, there exist integrable functions $r,q:[a,b]\to \RR$ such that $f(t) = r(t) + q(t)A$ for all $t \in [a,b]$. Since $r$ and $q$ are integrable functions, they are continuous almost everywhere in $[a,b]$, that is, they are discontinuous in subsets with zero measure. Thus, there exist a open set $I \subset [a,b]$ such that $r$ and $q$ are continuous in $I$ and  the closure of $I$ is $[a,b]$. 
Let $\epsilon> 0$, consider the functions $\delta_k:[-\epsilon,\epsilon]\to \RR$ given by 
$$
\delta_k(x) = c_k^{-1}\left[\frac{\cos\left(\pi x\epsilon^{-1}\right)+1}{2}\right]^{(l+1)k}
$$
where $c_k = \int_{-\epsilon}^\epsilon \left[\frac{\cos\left(\pi s\epsilon^{-1}\right)+1}{2}\right]^{(l+1)k} \textnormal{d}s$. Note that $(\delta_k)$ is Dirac sequence and each $\delta_k$ is $l$ times continuous differentiable.

\begin{itemize}
\item[(i)] If there exists $t \in I$ such that $f(t) \not\in \RFA^0$, then $r(t) + q(t)a_m \neq 0$.  
Since $I$ is open, there exists $\epsilon > 0$ such that $(t-\epsilon, t+\epsilon) \subset I$ and $f$ is continuous in $(t-\epsilon, t+\epsilon)$. 
Using the functions $\delta_k$, we define the functions $\eta_k(z) = r_k(z) + q_k(z)A$, $\forall z \in [a,b]$, with 
$$
r_k(z) = \left\lbrace\begin{array}{cl}
r(z)\delta_k(z-t)    &, \mbox{ if } |z-t|< \epsilon \\
0     &, \mbox{ otherwise}  
\end{array}\right.
$$
and 
$$
q_k(z) = \left\lbrace\begin{array}{cl}
q(z)\delta_k(z-t)    &, \mbox{ if } |z-t|< \epsilon \\
0     &, \mbox{ otherwise}  
\end{array}\right.
$$
for all $z \in [a,b].$

Let $B_k = \int_{a}^{b} f(z)\odot\eta_k(z)\textnormal{d}z$ and $[B_k]_1 = \{b_k\}$ with
\begin{eqnarray*}
b_k & = & \left(\int_{a}^{b} r(z)r_k(z) - a_m^2q(z)q_k(z) \textnormal{d}z\right) +  \\ &  & \, +   
\left(\int_{a}^{b} r(z)q_k(z) + r_k(z)q(z) + 2a_mq(z)q_k(z) \textnormal{d}z\right)a_m \\
& = & \left(\int_{t-\epsilon}^{t+\epsilon} (r(z)^2 - a_m^2q(z)^2)\delta_k(z-t) \textnormal{d}z\right) +   \\ &  & \, +    
\left(\int_{t-\epsilon}^{t+\epsilon} (2r(z)q(z) + 2a_mq(z)^2)\delta_k(z-t) \textnormal{d}z\right)a_m \\
& = & \int_{t-\epsilon}^{t+\epsilon} (r(z)^2 + a_mq(z))^2\delta_k(z-t) \textnormal{d}z.
\end{eqnarray*}
Since $(\delta_k)$ is a Dirac sequence, we have that $b_k \to (r(t)^2 + a_mq(t))^2 > 0$ when $k \to \infty$. Thus, for $k$ sufficient large we have that $b_k > 0$, which implies that $B_k \not\in \RFA^0$. This contradicts the hypothesis that $\int_{a}^{b} f(t)\odot\eta_k(t)\textnormal{d}t \in \RFA^0$. 
Therefore, we must have that $f(t) \in \RFA^0$ almost everywhere. 

\item[(ii)] 
For every $t \in I$, since $I$ is open, there exists $\epsilon > 0$ such that $(t-\epsilon, t+\epsilon) \subset I$ and $f$ is continuous in $(t-\epsilon, t+\epsilon)$. 
Using the functions $\delta_k$, we define the functions $\tilde{\eta}_k(z) = \tilde{r}_k(z)$, $\forall z \in [a,b]$, with 
$$
\tilde{r}_k(z) = \left\lbrace\begin{array}{cl}
\delta_k(z-t)    &, \mbox{ if } |z-t|< \epsilon \\
0     &, \mbox{ otherwise}  
\end{array}\right.$$
for all $z \in [a,b].$
From our hypothesis, we have
\begin{eqnarray*}
0 & = & \left(\int_{a}^{b} r(z)\tilde{r}_k(z) \textnormal{d}z\right) +     
\left(\int_{a}^{b} \tilde{r}_k(z)q(z)\textnormal{d}z\right)A \\
& = & \left(\int_{t-\epsilon}^{t+\epsilon} r(z)\delta_k(z-t) \textnormal{d}z\right) +     
\left(\int_{t-\epsilon}^{t+\epsilon} q(z)\delta_k(z-t)\textnormal{d}z\right)A.
\end{eqnarray*}
Using the above equality and the fact that $(\delta_k)$ is Dirac sequence, we obtain 
$$\Rightarrow \left\lbrace \begin{array}{l}
0 = \int_{t-\epsilon}^{t+\epsilon} r(z)\delta_k(z-t) \textnormal{d}z \xrightarrow[k\to \infty]{} r(t)     \\ 
0 = \int_{t-\epsilon}^{t+\epsilon} q(z)\delta_k(z-t) \textnormal{d}z \xrightarrow[k\to \infty]{} q(t)
\end{array}\right..
$$
Therefore, we conclude that $f(t) = 0$ almost everywhere in $[a,b].$
\end{itemize}
\end{proof}

\begin{corollary}(du Bois-Reymond's lemma I) \label{cor:fundamental2}
Let $A$ be an asymmetric fuzzy number such that $[A]_1 = \{a_m\}$. 
\begin{itemize}
\item[(i)] If $f:[a,b]\longrightarrow\RFA$ is a differentiable function almost everywhere in $[a,b]$  such that  
\begin{equation}\label{eq:fundamental2a}
\int_{a}^{b} f(t)\odot\eta'(t)\textnormal{d}t \in \RFA^0	
\end{equation}
for any $\eta \in C^1\left([a,b],\RFA\right)$ with $\eta(a)=\eta(b)=0$, then 
$f(t) - u \in \RFA^0$ almost everywhere in $[a,b]$ for some $u \in \RFA$.

\item[(ii)] If $f:[a,b]\longrightarrow\RFA$ is a differentiable function almost everywhere in $[a,b]$  such that  
\begin{equation}\label{eq:fundamental2b}
\int_{a}^{b} f(t)\odot\eta'(t)\textnormal{d}t = 0	
\end{equation}
for any $\eta \in C^1\left([a,b],\RFA\right)$ with $\eta(a)=\eta(b)=0$, then 
$f(t) = u$ almost everywhere for some $u \in \RFA$.

\end{itemize}
\end{corollary}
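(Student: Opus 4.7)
The plan is to reduce both parts to the classical (real-valued) du Bois-Reymond lemma by restricting the class of test functions $\eta$ to real-valued $C^1$ functions embedded in $\RFA$ via the inclusion $\RR \subset \RFA$. I prefer this over the ``natural'' choice $\eta(t) = \int_a^t (f(s) \ominus u)\,\textnormal{d}s$, which is only absolutely continuous and would require an additional approximation argument to land inside $C^1\!\left([a,b],\RFA\right)$.

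The first step is to observe that when $\eta(t) = r_\eta(t) \in \RR$ (so $q_\eta \equiv 0$), formula \eqref{eq:cross_product} collapses and $f(t) \odot \eta'(t) = \eta'(t)\, f(t)$, that is, the $\Psi$-cross product reduces to ordinary scalar multiplication. Writing $f(t) = r(t) + q(t) A$, the hypothesis integral therefore becomes
\[
\int_a^b f(t) \odot \eta'(t)\,\textnormal{d}t \;=\; \left(\int_a^b \eta'(t)\, r(t)\,\textnormal{d}t\right) \;+\; \left(\int_a^b \eta'(t)\, q(t)\,\textnormal{d}t\right) A
\]
for every real-valued $\eta \in C^1([a,b])$ with $\eta(a)=\eta(b)=0$.

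For part (i), the requirement that this element lies in $\RFA^0$, together with the characterization $B \in \RFA^0 \Leftrightarrow r_B + q_B a_m = 0$, is equivalent to $\int_a^b \eta'(t)\bigl(r(t) + a_m q(t)\bigr)\,\textnormal{d}t = 0$ for every admissible real-valued $\eta$. The classical du Bois-Reymond lemma then yields a constant $c$ with $r(t) + a_m q(t) = c$ almost everywhere, and choosing $u := c \in \RR \subset \RFA$ gives $f(t) \ominus u \in \RFA^0$ a.e. For part (ii), the stronger hypothesis (the integral vanishes as an element of $\RFA$) separates into the two independent real conditions $\int_a^b \eta'(t) r(t)\,\textnormal{d}t = 0$ and $\int_a^b \eta'(t) q(t)\,\textnormal{d}t = 0$; applying the classical du Bois-Reymond lemma twice produces constants $c_r, c_q$ with $r \equiv c_r$ and $q \equiv c_q$ almost everywhere, so $u := c_r + c_q A$ satisfies $f(t) = u$ a.e.

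The main (and only mild) obstacle is the legitimacy of restricting attention to real-valued test functions: this is immediate because the hypothesis is quantified over \emph{all} $\eta \in C^1\!\left([a,b],\RFA\right)$, and by Proposition \ref{prop:derivative} any $r_\eta \in C^1([a,b],\RR)$ lifts to $r_\eta + 0\cdot A \in C^1\!\left([a,b],\RFA\right)$ with the same boundary behaviour. One also implicitly needs $r, q \in L^1([a,b])$ to invoke the classical lemma, which is unproblematic given that $f$ is differentiable almost everywhere. Noticeably, this proof of du Bois-Reymond does not proceed through the Lagrange lemma (Lemma \ref{lem:fundamental1}) or through Theorem \ref{thm:integral_square}, although the latter would provide an alternative route for part (i) via the construction $\eta(t)=\int_a^t (f(s) \ominus u)\,\textnormal{d}s$ once the regularity gap is bridged by mollification.
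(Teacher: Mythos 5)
Your proof is correct, but it follows a genuinely different route from the paper's. The paper first integrates by parts, using $\eta(a)=\eta(b)=0$ to move the derivative onto $f$ and rewrite the hypothesis as $\int_a^b f'(t)\odot\eta(t)\,\textnormal{d}t \in \RFA^0$ (resp.\ $=0$), and then invokes the fuzzy Lagrange lemma (Lemma \ref{lem:fundamental1}) to conclude $f'(t)\in\RFA^0$ (resp.\ $f'(t)=0$) a.e., from which constancy of $r+a_mq$ (resp.\ of $r$ and $q$) follows by integration. You instead never touch $f'$: you specialize the test functions to the real-valued ones $\eta = r_\eta + 0\cdot A$, observe that the $\Psi$-cross product then degenerates to scalar multiplication, and reduce each part componentwise to the classical real du Bois--Reymond lemma (applied to $r+a_mq$ for (i), and to $r$ and $q$ separately for (ii), the latter separation being legitimate because asymmetry of $A$ makes $\Psi$ injective). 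What each approach buys: the paper's argument stays internal to the fuzzy machinery it has just built, but it leans on the a.e.-differentiability hypothesis and on an integration-by-parts step whose regularity requirements (integrability of $f'$, validity of the fundamental theorem under mere a.e. differentiability) it does not fully verify; your argument imports the classical lemma as a black box but needs only integrability of $f$ --- which is already implicit in the hypothesis for \eqref{eq:fundamental2a}--\eqref{eq:fundamental2b} to make sense --- so it actually establishes a slightly stronger statement and sidesteps the delicate step entirely. Your only loose phrasing is the remark that integrability of $r,q$ is ``unproblematic given that $f$ is differentiable almost everywhere'': a.e. differentiability alone does not give integrability; the correct justification is that the standing hypothesis presupposes $f\odot\eta'$ (hence $f$) integrable. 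This does not affect the validity of the proof.
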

\begin{proof}
Let $r,q:[a,b]\to \RR$ be such that $f(t) = r(t) + q(t)A$ for all $t \in [a,b]$.
 Using the facts that $f$ and $\eta$ are differentiable almost everywhere and therefore continuous almost everywhere, we can apply the integration by parts to obtain:
 $$
 \int_{a}^{b} f(t)\odot\eta'(t)\textnormal{d}t = f(t)\odot\eta(t) \Big|_a^b \ominus \int_{a}^{b} f'(t)\odot\eta(t)\textnormal{d}t = - \int_{a}^{b} f'(t)\odot\eta(t)\textnormal{d}t.
 $$
Note that $f(t)\odot\eta(t) \Big|_a^b = 0$ because $\eta(a) = \eta(b) = 0.$
\begin{itemize}
\item[(i)] Using the hypotheses of the corollary and the equality above, we have 
 $$
 \int_{a}^{b} f'(t)\odot\eta(t)\textnormal{d}t \in \RFA^0
 $$
for all $\eta$ satisfying the conditions of the enunciate. From Item (i) of Lemma \ref{lem:fundamental1}, 
we have that $f'(t) \in \RFA^0$ almost everywhere. This implies that 
$r'(t) + q'(t)a_m = 0$ almost everywhere and therefore 
$r(t) + q(t)a_m = u_m$ almost everywhere, for some $u_m \in \RR$. 
For every $u \in \RFA$ such that $u_m = 0.5(\underline{u}_1 + \overline{u}_1)$, the claim of Item (i) holds true. 

\item[(ii)] Again, from our hypotheses, we have 
$$
 \int_{a}^{b} f'(t)\odot\eta(t)\textnormal{d}t = 0
 $$
for all $\eta \in C^1([a,b],\RFA)$ with $\eta(a)= \eta(b) = 0$. 
From Item (ii) of Lemma \ref{lem:fundamental1}, we conclude that $f'(t)  = 0$, that is, $r(t) =  q'(t)=0$ almost everywhere.
This implies that $f(t) = u$ almost everywhere, for some $u \in \RFA$. 
\end{itemize}
\end{proof}

\begin{lemma}(du Bois-Reymond's lemma II)\label{lem:fundamental3} 
Let $A$ be an asymmetric fuzzy number such that $[A]_1 = \{a_m\}$. 
If $f:[a,b]\to \RFA$ is a integrable function such that 
\begin{equation}\label{eq:aux_p}
\int_{a}^{b}f(t) \odot \eta'(t)\,\textnormal{d}t=0	
\end{equation}
for any $\eta \in C^1\left([a,b],\RFA\right)$ with $\eta(a) = \eta(b)=0$, 
then $f(t) \ominus u \in \RFA^0$ almost everywhere in $[a,b]$, where  $u \in \RFA.$
\end{lemma}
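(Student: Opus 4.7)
The plan is to reduce the fuzzy statement to the classical real-valued du Bois--Reymond lemma by restricting attention to real-valued test functions. First, write $f(t) = r(t) + q(t)A$, where by Proposition~\ref{prop:integral} the components $r, q : [a,b] \to \RR$ are integrable; this is the natural decomposition in which to look for the putative constant $u = r_0 + q_0 A$.

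The key idea is to exploit the inclusion $\RR \subset \RFA$. Any $\psi \in C^1([a,b],\RR)$ with $\psi(a) = \psi(b) = 0$ can be regarded as an admissible test function $\eta = \psi + 0 \cdot A \in C^1([a,b], \RFA)$ satisfying $\eta(a) = \eta(b) = 0$. For such an $\eta$, the defining formula \eqref{eq:cross_product_original} (or, equivalently, \eqref{eq:cross_product} with $r_C = \psi'(t)$ and $q_C = 0$) collapses to ordinary scalar multiplication:
\begin{equation*}
f(t) \odot \psi'(t) \;=\; \psi'(t)\, f(t) \;=\; \psi'(t) r(t) + \psi'(t) q(t) A.
\end{equation*}
Plugging this into the hypothesis \eqref{eq:aux_p} and applying Proposition~\ref{prop:integral} to separate the real and $A$-components yields the two classical identities
\begin{equation*}
\int_a^b \psi'(t) r(t)\,\textnormal{d}t = 0 \qquad \text{and} \qquad \int_a^b \psi'(t) q(t)\,\textnormal{d}t = 0,
\end{equation*}
valid for every $\psi \in C^1([a,b], \RR)$ with $\psi(a) = \psi(b) = 0$.

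At this point the classical du Bois--Reymond lemma for integrable real-valued functions furnishes constants $r_0, q_0 \in \RR$ such that $r(t) = r_0$ and $q(t) = q_0$ almost everywhere in $[a,b]$. Setting $u = r_0 + q_0 A \in \RFA$, I obtain in fact the stronger conclusion $f(t) = u$ almost everywhere, so in particular $f(t) \ominus u = 0 \in \RFA^0$ almost everywhere, which is exactly the statement of the lemma.

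I do not anticipate a genuine obstacle on this route: the only point requiring (routine) verification is the simplification $f \odot \psi' = \psi' f$ when $\psi'$ is real, after which the argument proceeds entirely inside the classical theory. An alternative strategy, more in the spirit of Theorem~\ref{thm:integral_square}, would be to set $u = \tfrac{1}{b-a}\int_a^b f(s)\,\textnormal{d}s$, propose the test function $\eta_\ast(t) = \int_a^t (f(s) \ominus u)\,\textnormal{d}s$, and then show via expansion of $(f \ominus u)^2$ that $\int_a^b (f(t) \ominus u)^2\,\textnormal{d}t = 0$, concluding by Theorem~\ref{thm:integral_square}. The real obstacle there is that $\eta_\ast$ is only absolutely continuous, not $C^1$, and extending the hypothesis to such test functions by a density argument is delicate because $f$ is assumed only integrable (so we cannot freely pass to the limit inside the bilinear expression $f \odot \eta'$). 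The reduction to the classical case via real-valued test functions avoids this difficulty altogether.
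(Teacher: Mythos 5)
Your proof is correct, and it takes a genuinely different route from the paper's. The paper follows exactly the ``alternative strategy'' you sketched at the end: it sets $u=(b-a)^{-1}\int_a^b f(t)\,\textnormal{d}t$, replaces $r,q$ by left-limit modifications $\tilde r,\tilde q$, takes $\eta(t)=\int_a^t(\tilde f(s)\ominus u)\,\textnormal{d}s$ as the test function, and concludes from Theorem~\ref{thm:integral_square} that $\int_a^b(f(t)\ominus u)^2\,\textnormal{d}t=0$, hence $f(t)\ominus u\in\RFA^0$ almost everywhere; the regularity issue you flag is real, since for a merely integrable $f$ the left-limit modification need not be continuous (a step function already defeats it), so the paper's $\eta$ is not obviously $C^1$. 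Your reduction avoids this entirely: real-valued test functions $\eta=\psi+0\cdot A$ are admissible, the identity $f\odot\psi'=\psi'f$ collapses the cross product to scalar multiplication, and Proposition~\ref{prop:integral} together with the injectivity of $\Psi$ (asymmetry of $A$) splits the hypothesis into two classical du Bois--Reymond conditions for $r$ and $q$ separately. What you gain is a more elementary argument that bypasses the order-theoretic machinery of Section 3 and yields the strictly stronger conclusion $f(t)=u$ almost everywhere; the weaker stated conclusion $f(t)\ominus u\in\RFA^0$ is thus an artifact of the paper's method rather than of the hypothesis, since the hypothesis is the full equality $\int_a^b f\odot\eta'\,\textnormal{d}t=0$ and real-valued variations already determine both components of $f$. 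What the paper's route buys in exchange is a self-contained ``fuzzy energy'' argument that stays uniform with the rest of Section~\ref{sec:lemmas} and showcases Theorem~\ref{thm:integral_square}; but as a proof of the lemma as stated, yours is both simpler and sharper.
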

\begin{proof}
Let $u = v + wA = (b-a)^{-1}\int^b_a f(t)\,\textnormal{d}t$ and let $r,q:[a,b]\to \RR$ be 
such that $f(t) = r(t) + q(t)A$ for all $t \in [a,b].$ 
Since $f$ is integrable, the functions $r$ and $q$ are also integrable. This implies that $r$ and $q$ are continuous almost everywhere and the functions $\tilde{r}, \tilde{q}:[a,b]\to \RR$ given below are continuous in $[a,b]$:
$$
\tilde{r}(t) = \left\lbrace\begin{array}{cl}
\lim_{s\to t^-} r(s)   &, \mbox{ if } t \in (a,b] \\
\lim_{s\to a^+} r(s)   &, \mbox{ if } t=a
\end{array}\right.
$$
and
$$
\tilde{q}(t) = \left\lbrace\begin{array}{cl}
\lim_{s\to t^-} q(s)   &, \mbox{ if } t \in (a,b] \\
\lim_{s\to a^+} q(s)   &, \mbox{ if } t=a
\end{array}\right..
$$ 
Consider the function $\tilde{f}(t) = \tilde{r}(t) + \tilde{q}(t)A$ and 
$$
\eta(t) = \int_a^t (\tilde{f}(s) \ominus u)ds
$$
for all $t \in [a,b].$
Note that  $\eta$ is continuously differentiable in $[a,b]$ with 
$\eta(a) = \eta(b) = 0$ and $\eta'(t) = \tilde{f}(s) \ominus u = f(s) \ominus u$ 
almost everywhere because $r(t) = \tilde{r}(t)$ and $q(t) = \tilde{q}(t)$ almost everywhere. 
From Theorem \ref{thm:integral_square}, we have 
\begin{eqnarray*}
0 & \leq & \int_{a}^{b}(f(t) \ominus u) \odot (f(t) \ominus u)\,\textnormal{d}t \\
 & = & \int_{a}^{b} (f(t) \ominus u) \odot \eta'(t) \,\textnormal{d}t \\
 & = & \int_{a}^{b} f(t) \odot \eta'(t) \,\textnormal{d}t \ominus u \odot \int_{a}^{b} \eta'(t) \,\textnormal{d}t = 0.  
\end{eqnarray*}
The last equality follows from the hypothesis that $\int_{a}^{b} f(t) \odot \eta'(t) \,\textnormal{d}t = 0$ and $u \odot \int_{a}^{b} \eta'(t) = u\odot \eta(t) \Big|_a^b = 0$ because 
$\eta(a)=\eta(b) =0$. 
Again, from the second part of Theorem \ref{thm:integral_square}, we conclude that 
$f(t) \ominus u \in \RFA^0$ almost everywhere. 
\end{proof}

In what follows we present a generalization of du Bois--Reymond lemma for $\RFA$-valued functions. 

\begin{theorem}(Fuzzy du Bois--Reymond lemma)\label{DR}
Let $A$ be an asymmetric fuzzy number such that $[A]_1 = \{a_m\}$ and let $f,g:[a,b]\longrightarrow\RFA$ be two functions defined respectively by $f(t)=r_f(t)+q_f(t)A$ and $g(t)=r_g(t)+q_g(t)A$. The following statements hold:  
\begin{itemize}
\item[(i)] If $f,g$ are integrable functions and
\begin{equation}\label{ldM}
\int_{a}^{b}\left[f(t)\odot\eta(t) \oplus g(t) \odot \eta'(t)\right]\,\textnormal{d}t=0	
\end{equation}
for any $\eta \in C^1\left([a,b],\RFA\right)$ with $\eta(a)=\eta(b)=0$, then there exists a continuous
function $\tilde{g}$ such that $\tilde{g}(t) \ominus g(t) \in \RFA^0$ and 
$\tilde{g}'=f$ almost everywhere.
	
\item[(ii)] If $f,g$ are continuous functions and $g$ is differentiable almost everywhere such that 
\begin{equation*}\label{ldM}
\int_{a}^{b}\left[f(t)\odot\eta(t) \oplus g(t) \odot \eta'(t)\right]\,\textnormal{d}t=0	
\end{equation*}
for all $\eta \in \textnormal{C}^1\left([a,b],\RFA\right)$ with $\eta(a) = \eta(b) = 0$, then 
$g' = f$ almost everywhere.

\end{itemize}
\end{theorem}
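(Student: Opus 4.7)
The plan is to reduce both parts of Theorem~\ref{DR} to the two fuzzy Lagrange/du Bois--Reymond lemmas already in hand, by a judicious use of integration by parts to shift the derivative in the integrand.

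For part (i), the strategy is to eliminate $f\odot\eta$ by introducing the antiderivative $F(t):=\int_a^t f(s)\,\textnormal{d}s$. By the fundamental-theorem statement recalled earlier, $F$ is continuous on $[a,b]$ with $F'(t)=f(t)$ almost everywhere, so the integration-by-parts formula combined with $\eta(a)=\eta(b)=0$ yields
\[
\int_a^b f(t)\odot\eta(t)\,\textnormal{d}t \;=\; F(t)\odot\eta(t)\Big|_a^b \;-\; \int_a^b F(t)\odot\eta'(t)\,\textnormal{d}t \;=\; -\int_a^b F(t)\odot\eta'(t)\,\textnormal{d}t.
\]
Substituting this into the hypothesis \eqref{ldM} produces $\int_a^b (g(t)\ominus F(t))\odot\eta'(t)\,\textnormal{d}t = 0$ for every admissible $\eta$. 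Lemma~\ref{lem:fundamental3} applied to $h:=g\ominus F$ then furnishes some $u\in\RFA$ with $h(t)\ominus u\in\RFA^0$ almost everywhere, and setting $\tilde g:=F\oplus u$ gives a continuous $\RFA$-valued function with $g(t)\ominus\tilde g(t)\in\RFA^0$ almost everywhere and $\tilde g'(t)=F'(t)=f(t)$ almost everywhere, as required.

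For part (ii), the stronger hypothesis that $f,g$ are continuous (and $g$ differentiable almost everywhere, with integrable derivative) allows shifting the derivative the \emph{other} way: integration by parts gives
\[
\int_a^b g(t)\odot\eta'(t)\,\textnormal{d}t \;=\; -\int_a^b g'(t)\odot\eta(t)\,\textnormal{d}t,
\]
so the hypothesis becomes $\int_a^b (f(t)\ominus g'(t))\odot\eta(t)\,\textnormal{d}t=0$ for every $\eta\in C^1([a,b],\RFA)$ with $\eta(a)=\eta(b)=0$. This class contains, in particular, every $\eta\in C^l([a,b],\RFA)$ with $\eta^{(j)}(a)=\eta^{(j)}(b)=0$ for $j=0,\dots,l$, so the strict form Lemma~\ref{lem:fundamental1}(ii) applies and yields $f(t)\ominus g'(t)=0$ almost everywhere, i.e.\ $g'=f$ almost everywhere.

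The principal technical obstacle lies in the integration-by-parts step of part (ii): the formula recalled after Proposition~\ref{prop:integral} requires $g'$ to be integrable on $[a,b]$, while the hypothesis as stated only asserts that $g$ is continuous and differentiable almost everywhere. The natural remedy is to interpret the hypothesis as $g\in\textnormal{AC}([a,b],\RFA)$, or equivalently to decompose $g(t)=r_g(t)+q_g(t)A$ via Proposition~\ref{prop:derivative} and appeal to the classical integration-by-parts formula for absolutely continuous real-valued functions componentwise. A second, more conceptual, subtlety is that in part (ii) it is essential to invoke the strict form Lemma~\ref{lem:fundamental1}(ii) rather than its $\RFA^0$-version (i): the latter would yield only $g'\ominus f\in\RFA^0$ almost everywhere, which is merely the conclusion already furnished by part (i) and would fail to deliver the sharpened statement.
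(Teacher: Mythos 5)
Your proof of part (i) is exactly the paper's: introduce $F(t)=\int_a^t f(s)\,\textnormal{d}s$, integrate by parts to reduce the hypothesis to $\int_a^b (g(t)\ominus F(t))\odot\eta'(t)\,\textnormal{d}t=0$, invoke Lemma~\ref{lem:fundamental3}, and set $\tilde g = F\oplus u$. No comment needed there.

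For part (ii) you take a genuinely different route. The paper stays with the same identity $\int_a^b (g\ominus F)\odot\eta'\,\textnormal{d}t=0$ and applies Corollary~\ref{cor:fundamental2}(ii) to the a.e.-differentiable function $g\ominus F$, obtaining $g\ominus F=u$ a.e.\ and hence $g'=F'=f$ a.e.; you instead integrate by parts on $g$ itself, rewrite the hypothesis as $\int_a^b (f\ominus g')\odot\eta\,\textnormal{d}t=0$, and conclude via the strict Lagrange lemma, Lemma~\ref{lem:fundamental1}(ii) (your choice of test-function class, $l=1$ with $\eta=\eta'=0$ at the endpoints, is legitimately contained in the class for which your identity holds). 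Both arguments are valid, and the trade-off is essentially where the regularity burden sits. Your version needs the Newton--Leibniz identity for $g$ (equivalently $g\in\textnormal{AC}([a,b],\RFA)$ with $g'$ integrable), which, as you correctly observe, is not literally guaranteed by ``continuous and differentiable almost everywhere'' --- a Cantor-function component in $r_g$ or $q_g$ would break the step. The paper's version appears to dodge this, but only superficially: the proof of Corollary~\ref{cor:fundamental2} performs exactly the same integration by parts on an a.e.-differentiable function, so the paper's chain of lemmas carries the identical unstated hypothesis. What your route buys is directness --- part (ii) no longer needs $F$ at all, and you land on $g'=f$ immediately rather than via $g=F\oplus u$ a.e.\ followed by the (glossed-over) argument that an a.e.\ identity between continuous functions can be differentiated. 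What the paper's route buys is economy: both parts flow from the single computation $\int (g\ominus F)\odot\eta'=0$, with the two variants of du Bois--Reymond's lemma I and II doing the case split. Your explicit flagging of the absolute-continuity issue is a genuine improvement in honesty over the source.
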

\begin{proof}
Define $F(t) = r_F(t) + q_F(t)A = \int_a^t f(s)ds$ for all $t \in [a,b].$  
Note that $F'(t)=f(t)$ almost everywhere. Using the hypotheses of theorem, the product rule, and integration by parts, we obtain 
\begin{align*}
0&=\int_{a}^{b}\left[f(t)\odot\eta(t) \oplus g(t) \odot \eta'(t)\right]\textnormal{d}t\\
&=\int_{a}^{b}\dfrac{\textnormal{d}}{\textnormal{d}t}\left[F(t)\odot\eta(t)\right]\textnormal{d}t \oplus \int_{a}^{b}\left[g(t)-F(t)\right]\odot\eta'(t)\textnormal{d}t \\
&=\int_{a}^{b}\left[g(t) \ominus F(t)\right]\odot\eta'(t)\textnormal{d}t.	
\end{align*}

\begin{itemize}
\item[(i)] From Lemma \ref{lem:fundamental3}, there exists $u =v+wA\in \RFA$ such that 
$g(t)\ominus F(t) \ominus u \in \RFA^0$ almost everywhere, that is, 
$$
(r_g(t) - r_F(t) - v) + (q_g(t) - q_F(t) - w)a_m = 0.
$$
Let $\tilde{g}(t) = F(t) \oplus u$ for all $t \in [a,b]$. Note that $\tilde{g}$ is continuous in $[a,b]$  and differentiable with $\tilde{g}' = f$ and $g(t) \ominus \tilde{g}(t) \in \RFA^0$ almost everywhere.

\item[(ii)] From Item (ii) of Corollary \ref{cor:fundamental2}, there exists $u \in \RFA$ such that 
$g(t) \ominus F(t) = u$ almost everywhere. 
This implies that $g(t) = F(t) \oplus u$ and $g'(t) = f(t)$ almost everywhere.
\end{itemize}
\end{proof}

\section{Final Remarks}

To the best of our knowledge, this is the first article that uses the notion of total order in the study of fuzzy variational  calculus. This concept is very important since the fundamental problems in this area deal with minimization/maximization of a functional. Another key property that is often used in the classical case is that the square a real number is non negative. 
Using the proposed order relation and the notion of multiplication $\odot$ ($\Psi$-cross product), we show that $B\odot B$=$B^2\geq 0$ for all fuzzy number $B$. This property is very useful to check several important results such as lemmas of Lagrange and du Bois-Reymond (see Section \ref{sec:lemmas}). 

Our study focuses on the space $\RFA$ that is composed by all fuzzy numbers that are linearly correlated with an asymmetric fuzzy number $A$ (i.e., $\RFA = \{r+qA\mid (r,q) \in \RR^2 \}$. This space has two important mathematical characteristics: 1) It is a Banach space, which is an ideal framework for developing a theory of differential and integral calculus and, at the same time, 2) it allows us to model phenomena such that uncertainties (or noise) cannot be disregarded in the task of obtaining an adequate solution for the problem at hand.   

Since each $B \in \RFA$ can be written as $B = r+qA$, in analogy to the language used in statistics (or even in stochastic calculus), we can understand $B$ as the ``composition'' of the real part ``$r$'' and the uncertain part ``$qA$''. In this context, we can interpret the elements of $\RFA$ from an epistemic point of view, which suggests a certain hierarchy of importance of the parameters $r$ and $q$ in the proposed notion of order on $\RFA$ (see Proposition \ref{Proorder}). 
Note that, between two fuzzy numbers, the greatest is that with the greatest center ($r+qa_m$), that is, the greatest middle point of the $1$-level. In the case of a tie, the modulus of the parameter $q$, which concerns the width of the noise ($qA$), decides which one is the greatest. 
Thus, since the centers of the fuzzy numbers are used to decide the greatest between two elements,  
the proposed order extends the usual order on the set of real numbers.  

We would like to present some brief comments comparing our proposal with the theory of fuzzy calculus based on the concept of generalized Hukuhara derivative or, for short, $gH$-derivative, which is the well-known and most widely used in the literature. On the one hand, in a first glance, the notion of genera\-lized Hukuhara derivative seems to be more general since it is not restricted to special classes of fuzzy numbers \cite{bede13}. 
However, its use requires the identification {\it a priori} of the switching points of the function that, roughly speaking, are the points of the domain that occur changes in the behavior of the widths of the $\alpha$-levels. In addition, it also requires the application of Stacking Theorem \cite{negoita75} to ensure its well-definition in at certain point, 
since, in general, its calculation is done in the endpoints of $\alpha$-levels. 
It is worth noting that the verification of the hypotheses of Stacking Theorem 
is a very hard task in practice. Thus, although no restrictions are imposed on the codomain, its use could be restrictive due to the comments above. Furthermore, to the best of our knowledge, the use of some notion of order relation in the study of fuzzy variational calculus under $gH$-derivative is not clear, which can make it difficult to understand the issue of minimizing/maximizing a functional. 
Beside the $gH$-derivative, there exist other notions of fuzzy derivative (see, e.g., \cite{khastan2022new})  for which one could study variational problems, but each one could lead us to different approaches to fuzzy variational calculus.  
However, there are still few approaches to fuzzy variational calculus in the literature \cite{verma2022systematic}.


Finally, the next steps of our research follow in the direction of the extension of the main results of the calculus of variations to the fuzzy case, such as the formalization of optimization of fuzzy functional taking values in $\RFA$. Note that the order relation on the corresponding function space is given as a natural extension of the order relation on $\RFA$. These results and others will be presented in the other parts of this series of articles on Fuzzy Variational Calculus in Linearly Correlated Space. 



\section*{Acknowledgements}

The research has been partially supported by FAPESP under grant numbers 2020/09838-0 and 2022/00196-1 and by CNPq under grant numbers 314885/2021-8 and 311976/2023-9.


\end{document}